\newtheorem{theorem}{Theorem}
\newtheorem{proposition}{Proposition}
\newtheorem{coro}{Corollary}
\newtheorem{lemma}{Lemma}
\newtheorem{remark}{Remark}
\title{{\bf Melnikov Method for Perturbed Completely Integrable Systems}
\footnotetext{2020 {\it Mathematics Subject Classification.} Primary: 34C15. Secondary: 34C25, 34D10.}
\footnotetext{{\it Melnikov method, periodic orbits, completely integrable systems.}. }
\footnotetext{}
\footnotetext{{\it Email addresses:} \texttt{fcrespo@ubiobio.cl}$^1$, \texttt{muribe@ucsc.cl}$^2$, \texttt{emartinez@ubiobio.cl}$^3$}}
\author{ F. Crespo$^1$, M. Uribe$^2$,  E. Mart\'{i}nez$^3$ \\[10pt]
{\small \textsl{$^{1,3}$GISDA, Dept. de Matem\'atica, Facultad de Ciencias, Universidad del B\'\i o-B\'\i o,}} \\
{\small \textsl{Collao 1202, Casilla 5-C. 
Concepci\'on,  Chile}}\\[5pt]
{\small \textsl{$^2$Dept. de Matem\'atica y F\'{i}sica Aplicadas, Universidad Cat\'olica de la Sant\'{i}sima Concepci\'on,}} \\
{\small \textsl{Alonso de Ribera 2850, Concepci\'on,  Chile}}\\[5pt]}
\date{}
\begin{document}
\maketitle
\begin{abstract}
We consider a completely integrable system of differential equations in arbitrary dimensions whose phase space contains an open set foliated by periodic orbits. This research analyzes the persistence and stability of the periodic orbits under a nonlinear periodic perturbation. For this purpose, we use the Melnikov method and Floquet theory to establish conditions for the existence and stability of periodic orbits. Our approach considers periods of the unperturbed orbits depending on the integrals and constant periods. In the applications, we deal with both cases. Precisely, we study the existence of periodic orbits in a perturbed generalized Euler system. In the degenerate case, we analyze the existence and stability of periodic orbits for a perturbed harmonic oscillator.
\end{abstract}

\section{Introduction}
\label{sec:Intro}
Our research addresses the existence and stability of periodic orbits in perturbed, completely integrable systems. Although integrable systems are {rara avis}, the relevance and countless applications of systems given as perturbed Euler and Kepler systems and the ubiquity of the harmonic oscillator justify, in our opinion, a dedicated study for this kind of systems. Moreover, in many applications, we are led to perturbed harmonic oscillators or pendulum systems after normalization.

{\color{black}
An ordinary differential system of equations is often easier to solve if we can find one or more first integrals, usually representing conservation laws associated with the studied phenomena. The Kepler and harmonic oscillator systems are Liouville (Abelian) integrable \cite{Liouville1855} and maximally superintegrable \cite{Fasso2005}. Hence, they are integrable in the broad sense introduced by Bogoyavlenskij \cite{Bogoyavlenskij1998}, where the author discussed Abelian and non-Abelian integrability. In this work, we are not restricted to the Hamiltonian formalism and consider integrability in the conventional sense. That is to say, by \textit{completely integrable system}, we mean an ordinary $N$-dimensional system of differential equations with $(N-1)$ functionally independent first integrals.  
}

This work will consider the perturbations of completely integrable systems with an open set full of periodic orbits. Then, we use the Melnikov method to study the persistence of the mentioned periodic orbits under non-autonomous perturbations. Precisely, we consider the following kinds of systems 
\begin{gather}
 \begin{aligned}
 \label{eq:SistemaPerturbado}
  \dot{x}=f_0(x)+\epsilon f_1(x,t;\epsilon),\quad x(t_0)=x_0,
 \end{aligned}
\end{gather}
where $x\in \mathcal{D}\subseteq \mathbb{R}^N$, $\mathcal{D}$ is an open set, $f_0\in\mathcal{C}^2(\mathcal{D};\mathbb{R}^N)$ and $f_1\in\mathcal{C}^2(\mathcal{D}\times\mathbb{R}\times(-\delta,\delta);\mathbb{R}^N)$ for a particular $\delta>0$. For the unperturbed case, $\epsilon=0$, the phase space of system \eqref{eq:SistemaPerturbado} is assumed to be full of periodic orbits. We will restrict to the case of a periodic perturbation. That is to say, the function $f_1$ is $T$-periodic in the argument $t$.

Our main results provide conditions for the existence and stability of periodic orbits in system \eqref{eq:SistemaPerturbado}. Precisely, Theorem~\ref{theorem:MainExistence} establishes conditions for the existence of periodic solutions of system \eqref{eq:SistemaPerturbado} by using the Implicit Function Theorem and the Poincar\'e map. This theorem considers the case in which the unperturbed orbits' period varies with the associated integrals' levels in system \eqref{eq:SistemaPerturbado} and the degenerated case of a constant period. Moreover, we provide conditions for analyzing the stability of the periodic orbits in Theorem~\ref{theorem:MainStability} and Corollary~\ref{coro:Inestabilidad}. In the applications, we establish the existence of periodic orbits for the non-degenerate and degenerate cases, whose perturbations contain Jacobi elliptic and trigonometric functions, respectively. The stability is specified for the degenerate case.

{\color{black}
The primary obstruction to the applicability of the Melnikov method is failure to comply with the non-degeneracy conditions, leading to the consideration of higher-order Melnikov theory. Degeneracies arise in several scenarios; one is the case of degenerate resonances, which was previously studied in \cite{Yagasaki1996,Yagasaki2003}, where the author considered one and two degrees of freedom respectively, and degeneracy is considered for frequencies whose first variation regarding its corresponding action variable is null, while the second derivative is not, see theorem~3.5 in \cite{Yagasaki1996}, or section 2.2 in \cite{Yagasaki2003}. However, our treatment of degenerate frequencies is different since we do not impose any condition on the higher-order derivatives of the frequencies.  
}

Oscillatory phenomena are of great importance in many fields of science and technology. Its applications cover various issues, from engineering to celestial mechanics or life sciences. However, our research studies an extension of the Melnikov method as an interesting mathematical object in its own right. This technique has proved its usefulness in very different situations. Melnikov originally introduced it in \cite{Melnikov1963} as a method to detect the existence of transverse homoclinic points of the Poincar\'e map, thus allowing the determination of chaotic dynamics through the Smale horseshoes. One year later, this technique was also employed by Arnold \cite{Arnold1964} to prove the existence of the phenomenon named after him as Arnold's diffusion. {\color{black} In addition, this method is also helpful in other contexts: it allows finding sub-harmonic periodic orbits \cite{Greenspan1983,Sun2014}, which will be the objective of this paper. Moreover, for the case of non-degenerate resonances, in \cite{Motonaga2024}, the author employs the subharmonic Melnikov functions to provide sufficient conditions for real-analytic non-integrability near periodic orbits of the unperturbed systems.}

The sub-harmonic Melnikov method is usually presented in the literature for planar systems and often deals with perturbed Hamiltonian systems, for instance, \cite{Wiggins2003,Teschl}. Nevertheless, due to the current development of science and technology, real applications usually call for a high-dimensional nonlinear system, which is frequently represented as an integrable system with a nonlinear perturbation. 

We organize our paper as follows. In Section~\ref{sec:StructureUnperturbedSystem}, we study the structure of the unperturbed system. Section~\ref{sec:StandardFormMelnikovFunc} computes a standard form of the original perturbed system based on the general solution of the unperturbed. An approximation of the Poincar\'e map is given using this standard form. Therefore, we can provide conditions for the existence and stability of periodic orbits in Section~\ref{sec:ExistenceOfPeriodicOrbits}. Finally, we show two applications of our results in Section~\ref{sec:Applications}.

\section{On the Geometry of the Unperturbed System}
\label{sec:StructureUnperturbedSystem}
Along this section we describe the solution functions and geometric structure of the unperturbed system ($\epsilon=0$) associated with \eqref{eq:SistemaPerturbado}.
{\color{black}
\subsection{Assumptions and Structure of the Unperturbed System}
We start with a detailed description of the so-called unperturbed system, which is obtained by setting $\epsilon=0$ in system \eqref{eq:SistemaPerturbado}
\begin{gather}
 \begin{aligned}
 \label{eq:SistemaNoPerturbado}
  \dot{x}=f_0(x),\quad x(t_0)=x_0.
 \end{aligned}
\end{gather}
This system is an autonomous and completely integrable system of differential equations defined in the open domain $\mathcal{D}\subseteq \mathbb{R}^N$. That is to say, it is endowed with $(N-1)$ functionally independent first integrals $\{I_1(x),\ldots,I_{N-1}(x)\}$. All these integrals are supposed to be not locally constant and defined in a full Lebesgue measure subset $\mathcal{D_L}\subset\mathcal{D}$. We impose that $\mathcal{D}_L=\mathcal{D}$ in what follows. Otherwise, we may restrict the domain of our study to the subset $\mathcal{D}_L$. Therefore, the following function is a smooth $(N-1)$-submersion
\begin{equation}
\label{eq:Submersion}
  I:\mathcal{D}\rightarrow \Lambda\subset\mathbb{R}^{N-1},\quad x\rightarrow I(x)=\left(I_1(x),\ldots,I_{N-1}(x)\right),
\end{equation}
with $\Lambda=I(\mathcal{D})$. We will use the notations $I_x=I(x)$ and $\xi=(\xi_1,...\xi_{N-1})$ for the elements in the image set $\Lambda$. Moreover, the trajectory of every solution of system \eqref{eq:SistemaPerturbado} is contained in $I^{-1}(\xi)$. This description recalls the general notion of a completely integrable system. Next, we provide the specific assumptions we shall require from  }\eqref{eq:SistemaNoPerturbado}.
\begin{itemize}
  \item[(i)] The pre-image $I^{-1}(\xi)\subset \mathcal{D}$ is a reunion of several disjoint closed curves, each one is entirely traversed by a periodic solution.
  \item[(ii)] For each periodic orbit with initial condition $x_0\in\mathcal{D}$, the period $P(x_0)$ is a positive real number, and it is a function of type $\mathcal{C}^1(\mathcal{D};\mathbb{R})$ {\color{black} in the argument $x_0$.}
  \item[(iii)] There exists $x_0\in\mathcal{D}$ and non-zero $m,n\in\mathbb{N}$ such that $m\,P(x_0)=n\,T$, where $T$ is the period of $f_1$ in \eqref{eq:SistemaPerturbado}.
  \item[(iv)] There exists a smooth $(N-1)$-sub-manifold $\mathcal{D}_0\subset\mathcal{D}$, dubbed as initial value manifold, satisfying that each non-trivial solution of \eqref{eq:SistemaNoPerturbado} intersects transversally $\mathcal{D}_0$ in one and only one single point. We denote by $x(t,x_0(\xi))$ the solution starting at $x_0(\xi)$, which is the intersection of $I^{-1}(\xi_1,...\xi_{N-1})$ with $\mathcal{D}_0$.
\end{itemize}

{\color{black} Prominent examples of completely integrable systems satisfying (i), (ii), (iii) and (iv) are linear harmonic oscillators, or the Euler equations for the free rigid body.}

With the above assumptions, {\color{black} the set of non-trivial solutions in the} open domain $\mathcal{D}$ is a $S^1$-fiber bundle space, in which the periodic orbits of system \eqref{eq:SistemaNoPerturbado} are the fibers, and the $(N-1)-$sub-manifold $\mathcal{D}_0$ is the base space. Moreover, the structure of this system imposes that the tangent space $T_x\mathcal{D}$ decomposes for each $x\in\mathcal{D}$ as follows
\begin{equation}
 T_x\mathcal{D}=\mathcal{W}_x\oplus\langle u_x\rangle,
\end{equation}
where $\mathcal{W}_x=\langle \nabla_x I_1,\ldots,\nabla_x I_{N-1}\rangle$ is a $(N-1)$-dimensional vector sub-space in $T_x\mathcal{D}$, and $u_x$ is a unitary vector generating $\mathcal{W}_x^\bot$. 

The trajectory of a solution of system \eqref{eq:SistemaNoPerturbado} with initial condition $x$ is dubbed as the orbit through $x$, and will be denoted as $\mathcal{O}_x$. Note that every orbit is contained in the following intersection
\begin{equation}
 \mathcal{O}_x\subseteq I^{-1}(\xi_1,...\xi_{N-1})=\bigcap_{j=1}^{N-1}I_j^{-1}(I_x).
\end{equation}

An obvious consequence of the above relation is that the vector field $f_0(x)$ defining \eqref{eq:SistemaNoPerturbado} belongs to the tangent bundle $T_x M_j^x$ of the level hyper-manifolds $M_j^x=\{z\in\mathbb{R}^N;\;I_j(z)=I_j(x)\}$. Taking into account that $T_x M_j^x=\langle \nabla_x I_j\rangle^\bot$, we have that
\begin{equation}
\label{eq:Perpendicularidad}
 f_0(x)\bot \nabla_x I_j,\quad \forall j\in\{1,..,N-1\},
\end{equation}
or equivalently
\begin{equation}
\label{eq:Perpendicularidad2}
 D_x I_j\cdot f_0(x)=0,\quad \forall j\in\{1,..,N-1\}.
\end{equation}
 
\subsection{Orbits Parametrization}
Let us consider a periodic orbit $\mathcal{O}_{x_0}$ with $x_0\in\mathcal{D}_0$, and let $I_0\in\mathbb{R}^{N-1}$ be defined through the submersion \eqref{eq:Submersion} as $I_0:=I(x_0)$. From assumption (iv), we have that $x_0$ is given by the following intersection 
$$x_0=I^{-1}(I_0)\bigcap\mathcal{D}_0.$$
Then, given an arbitrary $x_0\in\mathcal{D}_0$, we denote the time parametrization of $\mathcal{O}_{x_0}$ as follows
$$q(t,I_0):=x(t,x_0),$$
where $x(t,x_0)$ is the solution of \eqref{eq:SistemaNoPerturbado} through $x_0$. Note that $q(t,I_0)$ is $P(x_0)$-periodic. Due to the fact that $x_0$ and $I_0$ are in  unique correspondence, we will express the period of $\mathcal{O}_{x_0}$ as $P(I_0):=P(x_0)$, which allows us to establish the following map
\begin{equation}
\label{eq:DiffeomorphismG}
 G:S^1\times\Lambda\subset S^1\times\mathbb{R}^{N-1}\longrightarrow \mathcal{D}\subset\mathbb{R}^{N},\quad (I,\theta)\rightarrow G(I,\theta):=q(\lambda\, \theta,I),
\end{equation}
where $S^1=\mathbb{R}/T$, $I=(I_1,\ldots,I_{N-1})$ and $\lambda=P(I)/T$. Note that $G$ is a diffeomorphism since it is a bijective differentiable function. This map allows traversing all the periodic orbits in $\mathcal{D}$ at a uniformly $T$-periodic rate.  Furthermore, it satisfies the following property, which will be useful later on.

\begin{lemma}
\label{lemma:GTheta}
 The partial derivatives $\{G_{I_1},...,G_{I_{N-1}},G_\theta\}$ generate a basis in $\mathbb{R}^N$. Moreover, we have
 \begin{equation}
 \label{eq:GTheta}
  G_\theta(I,\theta)=\dfrac{P(I)}{T}f_0(G(I,\theta)),
 \end{equation}
 and
 \begin{equation}
 \label{eq:GI}
   DI_i\cdot G_{I_j}(I,\theta)= \delta_{j}^i,\quad DI_i\cdot G_{\theta}(I,\theta)=0,
 \end{equation}
where $\delta_{j}^i$ is the Kronecker symbol and $i,j=1,...,N-1$.
\end{lemma}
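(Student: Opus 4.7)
The plan is to prove the three claims in reverse order of their appearance in the statement, since the basis property will follow readily from the two orthogonality/duality relations.

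First, I would establish equation \eqref{eq:GTheta}. By the very definition of $G$, we have $G(I,\theta)=q(\lambda\theta,I)$ with $\lambda = P(I)/T$, and $q(\cdot,I)$ is by construction the solution of the unperturbed system \eqref{eq:SistemaNoPerturbado}. Thus $\partial_t q(t,I)=f_0(q(t,I))$, and applying the chain rule to differentiate $G$ with respect to $\theta$ immediately yields $G_\theta(I,\theta)=\lambda\, f_0(q(\lambda\theta,I))=(P(I)/T)\,f_0(G(I,\theta))$.

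Next, for the duality relations \eqref{eq:GI}, the key observation is that the first integrals are constant along solutions, so $I_i(q(t,I_0))=I_i(x_0)$ for all $t$, and by assumption (iv) the intersection with $\mathcal{D}_0$ gives $I_i(x_0) = (I_0)_i$. Rewriting this in terms of $G$, we obtain the identity $I_i(G(I,\theta))=I_i$, valid for all $(I,\theta)\in\Lambda\times S^1$. Differentiating this identity with respect to $\theta$ gives $DI_i\cdot G_\theta=0$; differentiating with respect to $I_j$ gives $DI_i\cdot G_{I_j}=\partial I_i/\partial I_j=\delta^i_j$. (A sanity check: the relation $DI_i\cdot G_\theta=0$ is also consistent with \eqref{eq:Perpendicularidad2} applied to $f_0(G(I,\theta))$, via \eqref{eq:GTheta}.)

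Finally, to conclude that $\{G_{I_1},\dots,G_{I_{N-1}},G_\theta\}$ is a basis of $\mathbb{R}^N$, since there are exactly $N$ vectors it suffices to verify linear independence. Suppose $\sum_{j=1}^{N-1} a_j G_{I_j} + b\, G_\theta = 0$. Applying the covector $DI_i$ and using \eqref{eq:GI} collapses the sum to $a_i = 0$ for each $i\in\{1,\dots,N-1\}$. One is left with $b\, G_\theta=0$, and by \eqref{eq:GTheta} together with the fact that the point $G(I,\theta)$ lies on a nontrivial periodic orbit (so $f_0(G(I,\theta))\neq 0$ by assumption (ii), which requires $P(I)>0$), it follows that $G_\theta\neq 0$, hence $b=0$.

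I expect no serious obstacle; the only subtlety is the clean use of assumption (iv), which ensures that parameterizing a periodic orbit by its intersection with $\mathcal{D}_0$ is unambiguous and makes the identity $I(G(I,\theta))\equiv I$ legitimate. The non-vanishing of $f_0$ along periodic trajectories (needed to conclude $G_\theta\neq 0$) is implicit in the standing assumption that $I^{-1}(\xi)$ consists of genuine closed orbits rather than fixed points.
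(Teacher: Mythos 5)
Your proof is correct, and the computations for \eqref{eq:GTheta} and \eqref{eq:GI} coincide with the paper's: the chain rule applied to $G(I,\theta)=q(P(I)\theta/T,I)$ gives \eqref{eq:GTheta}, and differentiating the identity $I_i(G(I,\theta))=I_i$ gives the duality relations (the paper obtains $DI_i\cdot G_\theta=0$ from \eqref{eq:Perpendicularidad2} rather than by differentiating in $\theta$, but the two derivations are equivalent in view of \eqref{eq:GTheta}). Where you genuinely diverge is the basis claim: the paper disposes of it in one line by invoking that $G$ is a diffeomorphism, a fact asserted earlier on the grounds that $G$ is ``bijective and differentiable'' --- which is not by itself sufficient, since a bijective $\mathcal{C}^1$ map need not have invertible differential. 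You instead prove linear independence directly from the duality relations you have just established, together with the observation that $G_\theta=(P(I)/T)f_0(G)\neq 0$ because nontrivial periodic orbits contain no equilibria. This reversal of logical order is the better route: it makes the lemma self-contained and in fact supplies the missing justification that $DG$ is invertible, i.e.\ it is what actually underwrites the claim that $G$ is a (local) diffeomorphism rather than presupposing it.
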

\begin{proof}
Since $G$ is a diffeomophism we have that the vectors $\{G_\theta,G_{I_1},...,G_{I_{N-1}}\}$ must be independent and therefore a basis in $\mathbb{R}^N$. Now we shall prove \eqref{eq:GTheta} and \eqref{eq:GI}. First, taking into account that $t=\dfrac{P(I)}{T}\theta$ and differentiating $G$ with respect to $t$ we obtain 
 $$\dfrac{d}{dt}(G)=G_{\theta}\cdot\dot\theta=G_\theta\,\dfrac{T}{P(I)}=f_0(G).$$
 Therefore, we have $G_\theta(I,\theta)={P(I)}/{T}f_0(G(I,\theta)),$ which proves \eqref{eq:GTheta}. For the remaining partial derivatives we consider the identities 
 $$I_j(G)=I_j,\quad j=1,...,N-1.$$
 We differentiate them with respect to $I_i$ for $i=1,...,N-1$, which leads to 
 $$DI_j\cdot G_{I_i}(I,\theta)=\delta_{j}^i.$$
 Finally, $DI_i\cdot G_{\theta}(I,\theta)=0$ is obtained from \eqref{eq:Perpendicularidad2}.
\end{proof}


\section{Standard Form and Melnikov Function}
\label{sec:StandardFormMelnikovFunc}
As it is customary, we will determine periodic orbits of the differential system \eqref{eq:SistemaPerturbado} by analyzing the presence of fixed points on the Poincar\'e map. For this purpose, we will need our system expressed in a particular form, which we obtain after carrying out the following suitable change of variables
\begin{equation}
 \label{eq:ChangeVariables}
 x\rightarrow (I,\theta),\quad x:=G(I,\theta),
\end{equation}
where $G$ is the diffeomorphic transformation given in \eqref{eq:DiffeomorphismG}. Now, we search for the expression of system \eqref{eq:SistemaPerturbado} in the new variables. In this regard, we consider the solution $x=x(t,x_0;\epsilon)$ of system \eqref{eq:SistemaPerturbado} through $x_0\in\mathcal{D}_0$, and differentiating with respect to $t$ we obtain
\begin{equation}
\label{eq:EqXpunto}
 \dot{x}=f_0(G)+\epsilon f_1(G,t;\epsilon)=G_{I_1}\dot I_1+\cdots+G_{I_{N-1}}\dot I_{N-1}+G_{\theta}\dot\theta.
\end{equation}
Next, we derive the expressions for $\dot\theta$, $\dot I_1,\ldots,\dot I_{N-1}$, which poses the system \eqref{eq:SistemaPerturbado} in a convenient form for the computation of the Poincar\'e map. We will refer to the differential equations $(\dot\theta,\dot I)$ as the standard form of system \eqref{eq:SistemaPerturbado}.

\begin{proposition}
 After applying the change of variables given in \eqref{eq:ChangeVariables}, the system of differential equation \eqref{eq:SistemaPerturbado} reads as follows
 \begin{gather}
  \begin{aligned}
  \label{eq:SistemaITheta}
   \dot I_i=&\,\epsilon\,F_{i+1}(I,\theta,t;\epsilon),\quad i=1,...,N-1,\\
   \dot\theta=&\,\Omega_I+\epsilon\,F_1(I,\theta,t;\epsilon),
  \end{aligned}
 \end{gather}
where $\Omega_I=T/P(I)$, and 
\begin{gather}
  \begin{aligned}
  \label{eq:SistemaIThetaF}
   F_1(I,\theta,t;\epsilon)=&\,\Omega_I\,\left[\dfrac{f_0(G(I,\theta))\cdot f_1(G(I,\theta),t;\epsilon)}{\langle f_0(G(I,\theta)),f_0(G(I,\theta))\rangle}-\sum_{i=1}^{N-1}\dfrac{f_0(G)\cdot G_{I_i}(G)}{\langle f_0,f_0\rangle}- DI_i\, f_1(G)\right],\\
   F_{i+1}(I,\theta,\tau;\epsilon)=&\, DI_i(G(I,\theta)) \cdot f_1(G(I,\theta),t;\epsilon),\quad i=1,...,N-1.
  \end{aligned}
 \end{gather}
\end{proposition}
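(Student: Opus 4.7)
The plan is to exploit the basis property from Lemma~\ref{lemma:GTheta}: differentiating $x=G(I,\theta)$ in $t$ yields
\[
\dot x = G_\theta\,\dot\theta + \sum_{i=1}^{N-1}G_{I_i}\,\dot I_i,
\]
and equating this with the right-hand side $f_0(G)+\epsilon f_1(G,t;\epsilon)$ of \eqref{eq:SistemaPerturbado} gives a linear system in the unknowns $(\dot\theta,\dot I_1,\dots,\dot I_{N-1})$. Since $\{G_{I_1},\dots,G_{I_{N-1}},G_\theta\}$ is a basis of $\mathbb{R}^N$, this system is uniquely solvable, and the two identities \eqref{eq:GTheta}--\eqref{eq:GI} provide exactly the dual pairings needed to read off the coefficients.

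To extract $\dot I_i$, I would apply the covector $DI_i$ to both sides of \eqref{eq:EqXpunto}. By \eqref{eq:GI} we have $DI_i\cdot G_\theta=0$ and $DI_i\cdot G_{I_j}=\delta_j^i$, while \eqref{eq:Perpendicularidad2} gives $DI_i\cdot f_0(G)=0$. Therefore
\[
\dot I_i = \epsilon\, DI_i(G)\cdot f_1(G,t;\epsilon),
\]
which is precisely $\epsilon F_{i+1}(I,\theta,t;\epsilon)$.

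To extract $\dot\theta$, I would take the Euclidean inner product of \eqref{eq:EqXpunto} with $f_0(G)$. Using \eqref{eq:GTheta} in the form $f_0(G)=\Omega_I\,G_\theta$, the $G_\theta$ term contributes $\langle f_0,f_0\rangle\,\dot\theta/\Omega_I$, while the $G_{I_i}$ terms contribute $\langle f_0,G_{I_i}\rangle\,\dot I_i$. Solving for $\dot\theta$ and substituting the expression already obtained for $\dot I_i$ yields
\[
\dot\theta = \Omega_I + \epsilon\,\Omega_I\left[\frac{f_0(G)\cdot f_1(G,t;\epsilon)}{\langle f_0,f_0\rangle} - \sum_{i=1}^{N-1}\frac{f_0(G)\cdot G_{I_i}}{\langle f_0,f_0\rangle}\,DI_i\cdot f_1(G,t;\epsilon)\right],
\]
which matches $\Omega_I+\epsilon F_1$ as defined in \eqref{eq:SistemaIThetaF}.

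The computation is essentially algebraic; there is no analytic obstacle once Lemma~\ref{lemma:GTheta} and \eqref{eq:Perpendicularidad2} are in hand. The only subtle point is making sure that the non-autonomous perturbation does not spoil the fiber-bundle geometry: the change of variables \eqref{eq:ChangeVariables} is defined purely from the unperturbed flow, so one must carefully keep the explicit time dependence of $f_1$ separate from the $(I,\theta)$-dependence introduced through $G$. Provided this bookkeeping is done cleanly, the proof reduces to reading off the dual coefficients in the basis $\{G_{I_1},\dots,G_{I_{N-1}},G_\theta\}$, and the stated standard form follows.
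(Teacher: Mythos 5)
Your proof is correct and follows essentially the same route as the paper: pair \eqref{eq:EqXpunto} with $DI_i$ to isolate $\dot I_i$, then with a multiple of $f_0$ to isolate $\dot\theta$ (the paper multiplies by $f_0/\langle f_0,f_0\rangle+\sum_i DI_i$, but the $\sum_i DI_i$ contributions cancel against the $\dot I_i$ terms, so your simpler choice of pairing with $f_0$ alone yields the same identity). Your final expression $\sum_i\frac{f_0\cdot G_{I_i}}{\langle f_0,f_0\rangle}\,\big(DI_i\cdot f_1\big)$ is the intended reading of the summand in \eqref{eq:SistemaIThetaF}, whose displayed minus sign between the fraction and $DI_i\,f_1$ appears to be a typographical slip for the product.
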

\begin{proof}
 The equation for each $\dot I_i$ is obtained by multiplying the expression \eqref{eq:EqXpunto} by $DI_i$, which yields 
 \begin{equation}
 \label{eq:EqXpuntoDIi}
 DI_i\cdot f_0+\epsilon DI_i\cdot f_1=DI_i\cdot G_{I_1}\dot I_1+\cdots+DI_i\cdot G_{I_{N-1}}\dot I_{N-1}+DI_i\cdot G_{\theta}\dot\theta.
\end{equation}
Taking into account Lemma~\ref{lemma:GTheta} and the equality  \eqref{eq:Perpendicularidad2} we have that $DI_i\cdot f_0=0$ and $DI_i\cdot G_{I_j}=\delta_j^i$. Thus, the above expression becomes
\begin{equation}
 \label{eq:EqXpuntoDIi2}
 \epsilon DI_i\cdot f_1=\dot I_{i}.
\end{equation}
Proceeding in a similar way, we obtain the equation for $\dot\theta$. Now we multiply \eqref{eq:EqXpunto} by
$$\dfrac{f_0}{\langle f_0,f_0\rangle}+\sum_{i=1}^{N-1}DI_i.$$
Again, using Lemma~\ref{lemma:GTheta} and \eqref{eq:Perpendicularidad2}, and after some algebraic manipulations, we get the following relation
$$1+\epsilon \dfrac{f_0\cdot f_1}{\langle f_0,f_0\rangle}=\dfrac{1}{\Omega_I}\dot\theta+\epsilon\left[\sum_{i=1}^{N-1}\dfrac{f_0(G)\cdot G_{I_i}(G)}{\langle f_0,f_0\rangle}- DI_i\, f_1(G)\right].$$
Then, resolving in $\dot\theta$ we obtain \eqref{eq:SistemaITheta}.
\end{proof}

\subsection{Fixed Points of the Poincar\'e Map}
\label{sec:PoincareMelnikov}
System \eqref{eq:SistemaITheta} can be expressed as an autonomous nonlinear $(N+1)$-system by adding an extra equation involving the independent variable as follows
 \begin{gather}
  \begin{aligned}
  \label{eq:SistemaIThetaAutonomo}
   \dot I_i=&\,\epsilon\,F_{i+1}(I,\theta,\tau;\epsilon),\quad i=1,...,N-1,\\
   \dot\theta=&\,\Omega_I+\epsilon\,F_1(I,\theta,\tau;\epsilon),\\
   \dot\tau=&\,1,
  \end{aligned}
 \end{gather} 
where the dot means $d/dt$ and $\tau\in \mathbb R/T$ is given by $\tau(t)=t\; mod(T)$. In what follows, we employ the compact notation $Z=(I,\theta,\tau)$.
{\color{black}
\begin{remark}
\label{remark:ThetaPeriodica}
For $\epsilon=0$, the frequency $\Omega_I=T/P(I)$ is constant and the above equations are easily integrated. Precisely, for the angle $\theta$ we have the linear function $\theta(t)=\theta_0+\Omega_It$. Thus, taking into account that $\theta\in \mathbb R/T$, and assumption (iii) on section~\ref{sec:StructureUnperturbedSystem}, we obtain that $\theta(t)$ is periodic with minimum period equal to $nT$
$$\theta(t+nT)=\theta_0+\Omega_It+\Omega_I\,nT=\theta_0+\Omega_It+m\,T=\theta_0+\Omega_It=\theta(t).$$
\end{remark}
}
 
Since system~\eqref{eq:SistemaIThetaAutonomo} is determined by means of a $T$-periodic field in the variables $\theta$ and $\tau$, the phase space is given by $S^1\times \Lambda\times  S^1$, with $S^1=\mathbb{R}/T$. Next, we are going to construct a Poincar\'e map following \cite{Guckenheimer1983}. For this aim, we define a global cross section
\begin{equation}
 \label{eq:CrossSection}
 \Sigma=\{(I,\theta,\tau)\in S^1\times \Lambda\times  S^1;\; \tau=0\}.
\end{equation}
In view of the last equation in \eqref{eq:SistemaIThetaAutonomo}, all solutions of this autonomous differential system cross $\Sigma$ transversely and the Poincar\'e map is defined globally as 
\begin{equation}
 \label{eq:PoincareMap}
 P_\epsilon:\Sigma\longrightarrow\Sigma,\quad Z_0=(I_0,\theta_0,0)\rightarrow  P_\epsilon(Z_0):=\Phi(T,Z_0;\epsilon),
\end{equation}
where $\Phi(t,Z_0;\epsilon)=(I(t),\theta(t),\tau(t))$ is the solution of system \eqref{eq:SistemaIThetaAutonomo} through $Z_0$. Note that $P_\epsilon$ is usually defined in a subset of $\Sigma$, but our construction allows for a global definition in the whole cross section $\Sigma$. Moreover, the travel time $T$ of every point in the global cross section $\Sigma$ is the same. 

Since the qualitative behaviour of system  \eqref{eq:SistemaITheta} can be studied by means of the Poincar\'e map, {\color{black} and taking into account remark~\ref{remark:ThetaPeriodica}}, we focus on the fixed points associated with the {\color{black}$n$-composition of $P_\epsilon\circ\dots\circ P_\epsilon$, which will be denoted as $P_\epsilon^n$. The fixed points of $P_\epsilon^n$} correspond to the periodic orbits of system \eqref{eq:SistemaITheta}. Therefore, we will investigate the zeroes of the difference between a point in $\Sigma$ and its image 
$$\widehat\Delta(Z_0)=\Phi(nT,Z_0;\epsilon)-Z_0.$$
Because $\tau\in S^1=\mathbb{R}/T$ is given by $\tau(t)=t\; mod(T)$, the last component of difference function is identically zero, hence we may simplify $\widehat\Delta(Z_0)$ by excluding the last component in the above expression. That is to say, we consider now 
$$z_0:=(I_0,\theta_0),\quad \phi(t,z_0;\epsilon):=(I(t,z_0;\epsilon),\theta(t,z_0;\epsilon)),$$
where $G(z_0)\in\mathcal{D}_0$, and $\phi(t,z_0;\epsilon)$ is the flow associated with the equation \eqref{eq:SistemaITheta}. Thus, the existence of periodic orbits is equivalent to analyze the zeroes of the displacement function
\begin{equation}
 \label{eq:DifferenceFunction}
  \Delta(z_0,\epsilon)=\phi(nT,z_0;\epsilon)-z_0.
\end{equation}
The following section provides a first order approximation of $\Delta$.

\subsection{Melnikov Function}
\label{sec:MelnikovFunction}
To study the function $ \Delta(z_0,\epsilon)$ we need to compute the solution of system \eqref{eq:SistemaITheta} at $t=T$. The strategy that we will follow is based on the computation of a suitable approximation of $\phi(t,z_0;\epsilon)$, which is the solution of system \eqref{eq:SistemaITheta}. It can be made with the aid of the parameters dependence theorem from the fundamental theory of ordinary differential equations. Since the vector field defining system \eqref{eq:SistemaITheta} depends continuously differentiable on the parameter $\epsilon$, the same is true for the solution $\phi(t,z_0;\epsilon)$. In particular, we have the following Taylor expansion in a neighborhood of $\epsilon=0$
$$\phi(t,z_0;\epsilon)=\phi_0(t,z_0)+\phi_1(t,z_0)\epsilon+O(\epsilon^2),$$
where $\phi_0(t,z_0)=\phi(t,z_0;0)$ is the solution of system \eqref{eq:SistemaITheta} for the unperturbed case $\epsilon=0$. Therefore, $\phi_0(t,z_0)=(I_{1,0}(t,z_0),...I_{N-1,0}(t,z_0),\theta_0(t,z_0))$ is given by
\begin{gather}
  \begin{aligned}
  \label{eq:Phi0}
   I_{i,0}(t,z_0)=\,I_{i}(z_0), \qquad  \theta_0(t,z_0)=\,\theta_0+\Omega(I(z_0))\,t,
  \end{aligned}
 \end{gather}
 where $I_0(z_0)=(I_1(z_0),...,I_{N-1}(z_0))\in\Lambda$ is a constant vector. Moreover, the function
 $$\phi_1(t,z_0)=\dfrac{\partial}{\partial\epsilon}\phi(t,z_0;\epsilon)|_{\epsilon=0}$$ 
can be obtained as the solution to the corresponding first variational equation. Following \cite{Teschl}, we write this equation as the initial value problem
\begin{equation}
 \label{eq:FirstVarEq}
 \dot\phi_1(t,z_0)=\phi_{1w}(t,\phi_0(t))\phi_1+\phi_{1\epsilon}(t,\phi_0(t)),\quad \phi_1(0,z_0)=0,
\end{equation}
being $w=(I,\theta)$, and
$$\phi_{1w}(t,w)=\dfrac{\partial}{\partial w}F(t,w;0)|_{w=\phi_0(t)},\quad \phi_{1\epsilon}(t,w)=\dfrac{\partial }{\partial\epsilon}F(t,w;\epsilon)|_{\epsilon=0},$$
where  $F(t,w;\epsilon)$ is the vector field defining \eqref{eq:SistemaITheta}. More specifically, the function $\phi_1(t,z_0)=(I_{1,1}(t,z_0),...I_{N-1,1}(t,z_0),\theta_1(t,z_0))$ is the solution of system \eqref{eq:FirstVarEq}, which has the following expression 
\begin{gather}
  \begin{aligned}
  \label{eq:SistemaFirstVarEq}
   \dot I_{i,1}(t,z_0)=&\,F_{i+1}(I_0,\theta_0,t;0),\\
   \dot\theta_1(t,z_0)=&\,{\nabla_{(I,\theta)}\Omega(I_0)}\cdot\phi_1(t,z_0)+F_1(I_0,\theta_0,t;0),
  \end{aligned}
 \end{gather} 
or equivalently
\begin{gather}
  \begin{aligned}
  \label{eq:SistemaFirstVarEqBis}
   \dot I_{i,1}(t,z_0)=&\,F_{i+1}(I_0(t,z_0),\theta_0(t,z_0),t;0),\\
   \dot\theta_1(t,z_0)=&\,\sum_{i=1}^{N-1}\dfrac{\partial\Omega(I_0)}{\partial I_i}\cdot I_{i,1}(t,z_0)+F_1(I_0,\theta_0,t;0),\\
  \end{aligned}
 \end{gather} 
 where $I_0(t,z_0)=(I_{1,0}(t,z_0),...,I_{N-1,0}(t,z_0))$.
 
Having into account that $\phi_1(0,z_0)=(I_{i,1}(0,z_0),\theta_1(0,z_0))=0$, we integrate the above system to obtain a computable expresion of $\phi_1(t,z_0)$
\begin{gather}
  \begin{aligned}
 \label{eq:IntIi}
 I_{i,1}(nT,z_0)&=\,\int_0^{nT} DI_i\big(I_0(t,z_0),\theta_0(t,z_0)\big) \cdot f_1\big(I_0(t,z_0),\theta_0(t,z_0),t\big)\, dt,\\
 \theta_1(nT,z_0)&=\,\sum_{i=1}^{N-1}\dfrac{\partial\Omega(I_0)}{\partial I_i} \int_0^{nT} \int_0^t DI_i\big(I_0(s,z_0),\theta_0(s,z_0)\big) \cdot f_1\big(I_0(s,z_0),\theta_0(s,z_0),s\big)\, ds\,dt\\
 & + \int_0^{nT} \Omega(I_0(t,z_0))\,\dfrac{f_0(I_0(t,z_0),\theta_0(t,z_0))\cdot f_1(I_0(t,z_0),\theta_0(t,z_0),t)}{|f_0(I_0(t,z_0),\theta_0(t,z_0))|^2}dt\\
 & - \int_0^{nT} \Omega(I_0(t,z_0))\,\sum_{i=1}^{N-1}\dfrac{f_0\cdot G_{I_i}(I_0(t,z_0),\theta_0(t,z_0))}{|f_0(I_0(t,z_0),\theta_0(t,z_0))|^2}\cdot DI_i\, f_1(I_0(t,z_0),\theta_0(t,z_0))dt.
  \end{aligned}
 \end{gather} 
We define the Melnikov function as the vector $\mathbf{M}=(M_1,...,M_N)$, whose components are obtained from the above integrals
 $$M_1=I_{1,1}(nT,z_0),\,...,\,M_{N-1}=I_{N-1,1}(nT,z_0),\, M_N=\theta_1(nT,z_0).$$
 Then the first order approximation of the displacement function may be computed from the Melnikov vector $\mathbf{M}$  as follows
\begin{equation}
 \label{eq:DisplacementFunctionFirstOrder}
 \Delta(z_0,\epsilon)=(\epsilon M_1 ,\epsilon M_2 ,\ldots,\epsilon M_{N-1},\Omega(I_0)nT+\epsilon M_N )+O(\epsilon^2).
\end{equation}
In the following section we prove our main result, which establishes sufficient conditions to ensure the existence of periodic orbits. It  is based on the analysis of the Melnikov vector given above.

{\color{black}
\begin{remark}
\label{remark:MNDegenerada}
Note that for the degenerate case given by ${\partial\Omega(I_0)}/{\partial I_i}=0$ for $i=1,\ldots,N-1$, the Melnikov component $M_N=\theta_1(nT,z_0)$ is considerably simplified. Moreover, for our purposes, we will employ the scaled Melnikov component  $M_N=\theta_1(nT,z_0)/ \Omega(I_0(t,z_0))$, which abusing of notation will be denoted with the same symbol. Precisely, we have
\begin{gather}
  \begin{aligned}
 \label{eq:MnDegenerada}
M_N&= \int_0^{nT}\dfrac{f_0(I_0(t,z_0),\theta_0(t,z_0))\cdot f_1(I_0(t,z_0),\theta_0(t,z_0),t)}{|f_0(I_0(t,z_0),\theta_0(t,z_0))|^2}dt\\
 & - \int_0^{nT}\sum_{i=1}^{N-1}\dfrac{f_0\cdot G_{I_i}(I_0(t,z_0),\theta_0(t,z_0))}{|f_0(I_0(t,z_0),\theta_0(t,z_0))|^2}- DI_i\, f_1(I_0(t,z_0),\theta_0(t,z_0))dt.
  \end{aligned}
 \end{gather} 
 \end{remark}
}

\section{Existence {\color{black} and Stability} of Periodic Orbits}
\label{sec:ExistenceOfPeriodicOrbits}
{\color{black}This section present the main contribution of the present work. Namely, conditions determining the existence and stability of periodic orbits.}
 
\subsection{Existence Conditions}
The existence of periodic orbits is obtained through the zeroes of the displacement function $\Delta(z_0,\epsilon)$. However, the fact that $\Delta(z_0,0)$ vanishes identically implies that so does the derivative with respect to $z_0$ and hence there is no use of the implicit function theorem. To overcome this difficulty we consider the reduced displacement function in the following theorem, {\color {black} which provides the initial conditions for periodic orbits $(I_{0}^*,\theta_0^*)\in S^1\times\mathbb{R}^{N-1}$ in angle-action variables given by \eqref{eq:ChangeVariables}.}

\begin{theorem}
\label{theorem:MainExistence}
 Let us assume that there exists an initial condition $z_{0}^*=(I_{0}^*,\theta_0^*)$ of the unperturbed system such that {\color{black} the associated orbit is $P(I_0^*)$-periodic and the period satisfies the resonance condition $m\,P(I_0^*)=n\,T,$ for certain $ m,n\in\mathbb{N}$}. Additionally, let us assume that one of the following condition holds
 \begin{itemize}
  \item[(a)] Non-degenerate case:
  \begin{gather}
   \begin{aligned}
    &\dfrac{d\Omega_I}{dI}(I_{0}^*)\neq0,\\
    & M_i(I_{0}^*,\theta_0^*)=0,\quad i=1,...,N-1,\\
    &\left[\dfrac{(M_1,...,M_{N-1},\Omega_I)}{\partial (I_0,\theta_0)}\right]_{(I_{0}^*,\theta_0^*)}\neq0.
   \end{aligned}
  \end{gather}
  \item[(b)] Degenerate case:
  \begin{gather}
   \begin{aligned}
    &\dfrac{d\Omega_I}{dI}(I_{0}^*)=0,\\
    & M_i(I_{0}^*,\theta_0^*)=0,\quad i=1,...,N,\\
    &\left[\dfrac{(M_1,...,M_{N})}{\partial(I_0,\theta_0)}\right]_{(I_{0}^*,\theta_0^*)}\neq0.
   \end{aligned}
  \end{gather}
 \end{itemize}
Then, for each $0<\epsilon\ll1$, the system \eqref{eq:SistemaPerturbado} has a periodic solution $x(t, x_0^*(\epsilon))$ of period near to $nT$, which initial condition $ x_0^*(\epsilon)$ is located in a neighborhood of  $x_0^*=G(z_0^*).$
\end{theorem}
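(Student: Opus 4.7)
The strategy is to recast the existence of periodic orbits as the existence of fixed points of the $n$-th iterate of the Poincar\'e map $P_\epsilon^n$ introduced in Subsection~\ref{sec:PoincareMelnikov}, and then to locate those fixed points by applying the Implicit Function Theorem (IFT) to a suitably reduced displacement function. The main obstacle for a direct application of IFT to $\Delta(z_0,\epsilon)$ is that, once the resonance $m P(I_0^*)=nT$ is taken into account, $\Delta(z_0^*,0)=0$ and so does $D_{z_0}\Delta(z_0^*,0)$; the cure is to factor out the scalar $\epsilon$ from those components of $\Delta$ that vanish to first order in $\epsilon$.

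First I would lift the $\theta$-component to the universal cover $\mathbb{R}$; the resonance gives $\Omega(I_0^*)\,nT=mT$, so subtracting $mT$ from the lifted last component of $\Delta$ produces
\begin{equation*}
\bar\Delta(z_0,\epsilon)=\bigl(\epsilon M_1,\ldots,\epsilon M_{N-1},\; nT[\Omega(I_0)-\Omega(I_0^*)]+\epsilon M_N\bigr)+O(\epsilon^2),
\end{equation*}
whose zeros correspond exactly to the sought fixed points of $P_\epsilon^n$. In case~(a) I would divide only the first $N-1$ components by $\epsilon$ and leave the last one untouched, giving a reduced map $F(z_0,\epsilon)=\bigl(M_1,\ldots,M_{N-1},\,nT[\Omega(I_0)-\Omega(I_0^*)]\bigr)+O(\epsilon)$ with $F(z_0^*,0)=0$ by the Melnikov hypotheses; the last row of $D_{z_0}F(z_0^*,0)$ equals $(nT\,\nabla\Omega_I(I_0^*),0)$, so its determinant is $nT$ times the Jacobian $\partial(M_1,\ldots,M_{N-1},\Omega_I)/\partial(I_0,\theta_0)$, non-zero by hypothesis. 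In case~(b), the vanishing of $d\Omega_I/dI$ at $I_0^*$ together with the rescaling of Remark~\ref{remark:MNDegenerada} makes the $\theta$-component of $\bar\Delta$ itself $O(\epsilon)$, so one can divide all $N$ components by $\epsilon$ to obtain $F=(M_1,\ldots,M_N)+O(\epsilon)$ with $F(z_0^*,0)=0$ and $D_{z_0}F(z_0^*,0)=\partial(M_1,\ldots,M_N)/\partial(I_0,\theta_0)$, non-zero by hypothesis.

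In both cases the IFT then yields a unique $C^1$ branch $\epsilon\mapsto z_0^*(\epsilon)$ with $z_0^*(0)=z_0^*$ and $F(z_0^*(\epsilon),\epsilon)=0$ for $0<\epsilon\ll 1$; pulling back through the diffeomorphism $G$ of \eqref{eq:DiffeomorphismG} produces the perturbed initial condition $x_0^*(\epsilon)=G(z_0^*(\epsilon))$ in a neighbourhood of $x_0^*=G(z_0^*)$, whose orbit under \eqref{eq:SistemaPerturbado} is periodic with period close to $nT$. The main obstacle I anticipate is the joint $C^1$-regularity of $F$ at $(z_0^*,0)$ in the degenerate case: the condition $d\Omega_I/dI(I_0^*)=0$ yields only $\Omega(I_0)-\Omega(I_0^*)=O(|I_0-I_0^*|^2)$, so the quotient $nT[\Omega(I_0)-\Omega(I_0^*)]/\epsilon$ is not automatically jointly continuous at $(I_0^*,0)$; one must justify, either via a locally-constant-frequency reading of the degeneracy (as realized in the harmonic-oscillator application) or by a more refined scaling of $(I_0-I_0^*)$ with $\epsilon$, that the rescaled $\theta$-component of $F$ is genuinely $C^1$ near $(z_0^*,0)$. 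The joint $C^1$-regularity in $(z_0,\epsilon)$ of the $O(\epsilon)$ remainders follows routinely from the parameter-dependence theorem applied to \eqref{eq:SistemaITheta}.
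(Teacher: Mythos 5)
Your proposal follows essentially the same route as the paper: rescale the displacement function by $\epsilon$ (the first $N-1$ components in the non-degenerate case, all $N$ in the degenerate one) and apply the Implicit Function Theorem with exactly the stated Jacobian conditions. You are in fact more careful than the paper on two points --- you explicitly lift the angle and subtract $mT$ so that the last component genuinely vanishes at $(z_0^*,0)$, and you correctly flag that in case (b) the quotient $nT[\Omega(I_0)-\Omega(I_0^*)]/\epsilon$ need not be jointly continuous at $(z_0^*,0)$ when $d\Omega_I/dI(I_0^*)=0$ only gives second-order flatness rather than local constancy (the situation actually realized in the paper's harmonic-oscillator application); the paper's own proof passes over both issues silently.
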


\begin{proof}
We have already seen that the periodic orbits of system \eqref{eq:SistemaPerturbado} correspond to the zeroes of the displacement function $\Delta(z_0,\epsilon)$ given in \eqref{eq:DifferenceFunction}. Indeed, we can analyze the zeroes of the displacement function by means of the following first order approximation of $\Delta(z_0,\epsilon)$
$$\Delta(z_0,\epsilon)=(\epsilon M_1 ,\epsilon M_2 ,\ldots,\epsilon M_{N-1},\Omega(I_0)nT+\epsilon M_N )+O(\epsilon^2).$$
{\color{black} This function needs to be modified to obtain our conclusions when analyzing the zeroes of cases (a) and (b). We will consider a modified version of the first-order part of $\Delta$, which we will denote by $\tilde\Delta$ in both cases.
  \begin{itemize}
  \item[(a)] Non-degenerate case: Zeroes of $\Delta(z_0,\epsilon)$ are equivalent to zeroes of the following modified displacement function
$$ \tilde\Delta(z_0,\epsilon):=(M_1, M_2 ,\ldots, M_{N-1},\Omega(I_0)nT  )+O(\epsilon).$$
The condition $M_i(I_{0}^*,\theta_0^*)=0$ for $i=1,...,N-1$ implies that $\tilde\Delta(z_0^*,0)=0$. Moreover, to ensure that $\tilde\Delta(z_0^*,\epsilon)=0$ in a small neighborhood of $\epsilon=0$, we apply the implicit function theorem, which non-degeneracy condition is given by the determinant of the Jacobian matrix denoted as $Jac(\tilde\Delta(z_0,0))$
$$Det(Jac(\tilde\Delta(z_0^*,0)))=\left[\dfrac{(M_1,...,M_{N-1},\Omega_I)}{\partial (I_0,\theta_0)}\right]_{(I_{0}^*,\theta_0^*)}\neq0.$$
Thus, we have that there exists $z_0^*(\epsilon)$ in a neighborhood of $\epsilon=0$ satisfying $z_0^*(0)=z_0^*$ and $\tilde\Delta(z_0^*(\epsilon),\epsilon)=\Delta(z_0^*(\epsilon),\epsilon)=0$, which yields a fixed point of the $n$-Poincar\'e map and therefore a periodic orbit. Finally, the initial condition of the periodic orbit is obtained through the diffeomorphism \eqref{eq:DiffeomorphismG} as $ x_0^*(\epsilon)=G(z_0^*(\epsilon))$.
  \item[(b)] Degenerate case: This case is analogous to case (a). However, the condition ${d\Omega_I}/{dI}(I_{0}^*)=0$ implies that the modified displacement function $\tilde\Delta$ is different from the corresponding one in case (a)
$$ \tilde\Delta(z_0,\epsilon):=(M_1 , M_2 ,\ldots, M_{N} )+O(\epsilon).$$
Moreover the non-degeneracy condition is given by 
$$Det(Jac(\tilde\Delta(z_0^*,0)))= \left[\dfrac{(M_1,...,M_{N})}{\partial(I_0,\theta_0)}\right]_{(I_{0}^*,\theta_0^*)}\neq0.$$
Again, the implicit function theorem proves the existence of the initial condition $ x_0^*(\epsilon)=G(z_0^*(\epsilon))$ leading to the periodic orbit.
  \end{itemize}
}
\end{proof}

{\color{black}
\begin{remark}
The Melnikov function $M_{N}$ arises in the degenerate case. Note that other authors do not employ $M_{N}$ when dealing with degeneracies \cite{Yagasaki1996,Yagasaki2003}. This is achieved by imposing several conditions on the higher-order derivatives of the frequency $\Omega_I$, see theorem~3.5 in \cite{Yagasaki1996}. However, our treatment of degenerate frequencies is different since we do not impose any restriction on the frequencies. 
\end{remark}
}

{\color{black}

\subsection{Stability of Periodic Orbits}
In order to study the stability of periodic orbits associated with system~\eqref{eq:SistemaPerturbado},  it is convenient to introduce the following notation.  
\begin{equation}
\label{eq:At}
A(t;\epsilon)=\dfrac{\partial f_0}{\partial x}(x(t, x_0^*))+\epsilon\left(\dfrac{\partial f_{10}}{\partial x}(x(t, x_0^*),t)+\dfrac{\partial^2 f_0}{\partial x^2}(x(t, x_0^*))x_1(t, x_0^*)\right),
\end{equation}
and $f_{10}(x(t, x_0^*),t)=f_{1}(x(t, x_0^*),t;0)$. Notice that, since $x(t, x_0^*)$ is periodic, $A(t,\epsilon)$ is a matrix with periodic coefficients. 

Next, we employ several well-known facts coming from Floquet theory to study stability. They are collected in the following results.

\begin{theorem}
\label{theorem:MainStability}
For each small $\epsilon$, $0<\epsilon^2\ll 1$, we have
\begin{description}
  \item[(i)] If the real parts of the characteristic exponents associated with $A(t;\epsilon)$ are all negative, the periodic solution $x(t, x_0^*(\epsilon))$ is stable. Moreover, the attraction is exponential in a neighborhood of $x(t, x_0^*(\epsilon))$.
  \item[(ii)] If any of the real parts of the characteristic exponents associated with $A(t;\epsilon)$ are positive,  the periodic solution $x(t, x_0^*(\epsilon))$ is unstable.
\end{description}
\end{theorem}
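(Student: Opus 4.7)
The plan is to linearize system \eqref{eq:SistemaPerturbado} along the perturbed periodic trajectory $x(t, x_0^*(\epsilon))$ furnished by Theorem~\ref{theorem:MainExistence} and then apply Floquet--Lyapunov theory. First I would write $x(t, x_0^*(\epsilon)) = x(t, x_0^*) + \epsilon\, x_1(t, x_0^*) + O(\epsilon^2)$, where $x_1$ solves the first variational equation along the unperturbed orbit. The exact linearization satisfies $\dot y = B(t;\epsilon)\, y$ with
\[
 B(t;\epsilon) = \dfrac{\partial f_0}{\partial x}\bigl(x(t,x_0^*(\epsilon))\bigr) + \epsilon\,\dfrac{\partial f_1}{\partial x}\bigl(x(t,x_0^*(\epsilon)),t;\epsilon\bigr).
\]
A Taylor expansion of each summand in $\epsilon$, using the above expansion of the trajectory, yields $B(t;\epsilon) = A(t;\epsilon) + O(\epsilon^2)$ uniformly in $t$ over one period, where $A(t;\epsilon)$ is exactly the matrix in \eqref{eq:At}. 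This is the reason for introducing $A$ instead of working with $B$ directly: $A$ contains only quantities evaluated along the explicitly known unperturbed orbit.

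Next, since $x(t, x_0^*)$ is $P(I_0^*)$-periodic and $f_{10}(\cdot,t)$ is $T$-periodic, the resonance $mP(I_0^*)=nT$ ensures that $A(t;\epsilon)$ has $nT$-periodic coefficients, as already noted before the statement. Floquet's theorem then provides a fundamental matrix of $\dot y = A(t;\epsilon)\, y$ of the form $\Psi(t;\epsilon) = Q(t;\epsilon)\exp(tR(\epsilon))$ with $Q(\cdot;\epsilon)$ of period $nT$ and $R(\epsilon)$ constant; the characteristic exponents in the statement are the eigenvalues of $R(\epsilon)$. The change of variables $y = Q(t;\epsilon)\, z$ converts $\dot y = A y$ into $\dot z = R z$, so exponential decay or growth of $z$ translates into the same behaviour of $y$ up to the bounded periodic factor $Q$. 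To pass from $A$ to the true Jacobian $B$, I would invoke continuous dependence of the monodromy matrix on parameters together with the classical perturbation bounds for spectra of matrix families: an $O(\epsilon^2)$ perturbation shifts the real parts of the characteristic exponents by $o(1)$ as $\epsilon \to 0$, so for $\epsilon^2$ sufficiently small the signs of these real parts for $B$ coincide with those for $A$. Case~(i) then follows from the standard exponential stability theorem for time-periodic linear systems applied to $B$, and case~(ii) follows from the classical Lyapunov instability theorem applied in the direction of a positive-real-part exponent.

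The hard part will be transferring the linear stability information to the fully nonlinear flow of \eqref{eq:SistemaPerturbado}. The cleanest route is to work on the Poincar\'e section $\Sigma$ and show that the linearization of the $n$-th iterate $P_\epsilon^n$ at the fixed point $z_0^*(\epsilon)$ equals, up to $O(\epsilon^2)$, the monodromy $\Psi(nT;\epsilon) = Q(0;\epsilon)\exp(nT R(\epsilon))$; continuity of the spectrum under small matrix perturbations then ensures that a hyperbolic sink or source for $P_\epsilon^n$ corresponds, respectively, to an attracting or repelling periodic orbit of \eqref{eq:SistemaPerturbado}. A secondary subtlety is that the actual period of the perturbed orbit differs from $nT$ by $O(\epsilon)$; this mismatch is absorbed by the $O(\epsilon^2)$ Taylor error, but the argument must be phrased in terms of the return map rather than as if $B(t;\epsilon)$ were itself exactly $nT$-periodic in $t$.
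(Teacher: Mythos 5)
Your proposal follows essentially the same route as the paper: linearize about the perturbed periodic solution $x(t,x_0^*(\epsilon))$, expand the trajectory in $\epsilon$ so that the exact Jacobian is replaced by the explicitly computable matrix $A(t;\epsilon)$ of \eqref{eq:At} up to $O(\epsilon^2)$, and then apply Floquet theory (periodic factor $Q(t;\epsilon)$ times $e^{tB_\epsilon}$) to read off stability from the real parts of the characteristic exponents. You are in fact somewhat more careful than the paper's own argument, which absorbs the $O(\epsilon^2)$ linear discrepancy, the spectral-continuity step, and the $O(\epsilon)$ period mismatch into its remainder terms without explicit comment.
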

\begin{proof}
We consider the following change of variables, which employs the periodic solution $x(t, x_0^*(\epsilon))$ given in theorem~\ref{theorem:MainExistence}
$$x=x(t, x_0^*(\epsilon))+y.$$
This change of variables transform system~\eqref{eq:SistemaPerturbado} in the following maner
\begin{gather}
\begin{aligned}
   \dot x=\dot x(t, x_0^*(\epsilon))+\dot y&=f(x(t,x(t, x_0^*(\epsilon))+y,t;\epsilon)\\\nonumber
   &=f(x(t, x_0^*(\epsilon)),t;\epsilon)+\dfrac{\partial f}{\partial x}(x(t, x_0^*(\epsilon)),t;\epsilon)y+\mathcal O(y^2),\nonumber
\end{aligned}
\end{gather}
where $f(x,t;\epsilon)=f_0(x)+\epsilon f_1(x,t;\epsilon)$. Taking into account that $x(t, x_0^*(\epsilon))$ is an actual solution of system~\eqref{eq:SistemaPerturbado}, we may simplify the above equations as follows
\begin{equation}
\label{eq:Reducida}
\dot y =\dfrac{\partial f}{\partial x}(x(t, x_0^*(\epsilon)),t;\epsilon)y+\mathcal O(y^2).
\end{equation}
Moreover, it happens to be that $y(t)=0$ is an equilibrium solution for system~\eqref{eq:Reducida}, whose stability is equivalent to the stability of the periodic solution $x(t, x_0^*(\epsilon))$. That is to say, we have proceeded by linearization of the original system in a neighborhood of the periodic solution. Thus, the matrix in system~\eqref{eq:Reducida} has periodic coefficients and the theory of Floquet applies. Therefore, we need to study the characteristic exponent associated with the $nT$-periodic linear part in system~\eqref{eq:Reducida}, 
For this task, we consider the expansion in terms of the small parameter $\epsilon$ of the periodic solution 
\begin{equation}
\label{eq:ExpansionDeX}
x(t, x_0^*(\epsilon))=x_0(t, x_0^*(0))+\epsilon\,x_1(t, x_0^*(0))+\mathcal O(\epsilon^2),
\end{equation}
where $x_0(t, x_0^*(0))=x(t, x_0^*)$ is the periodic orbit of the unperturbed system, $ x_0^*(0)= x_0^*$,  and $x_1(t, x_0^*(0))$ is the solution of the variational equation associated with system~\eqref{eq:Reducida}, which now may be expressed as
\begin{equation}
\label{eq:ReducidaEpsilon}
\dot y =\dfrac{\partial f}{\partial x}(x(t, x_0^*),t;\epsilon)\,y+\epsilon\left[\dfrac{\partial^2 f}{\partial x^2}(x(t, x_0^*),t;\epsilon)x_1(t, x_0^*)\right]y+\mathcal O(\epsilon^2y^2).\nonumber
\end{equation}
Thus, we have $\dot y =A(t;\epsilon)\,y+\mathcal O(\epsilon^2y^2),$ and conditions $(i)$ and $(ii)$ follow directly from Floquet theory. More precisely, for each fixed $\epsilon$ there exists a periodic matrix $Q(t;\epsilon)$ and a constant coefficient matrix $B_\epsilon$ such that $Q(t;\epsilon) e^{tB_\epsilon}$ is a fundamental matrix. Moreover, $Q(t;\epsilon)$ provides a change of variables $y=Q(t;\epsilon)z$ normalizing the linear part of system~\eqref{eq:ReducidaEpsilon}, which becomes
\begin{equation}
\label{eq:ReducidaEpsilonNormalizada}
\dot z =B_\epsilon\, z+\mathcal O(\epsilon^2z^2),\nonumber
\end{equation}
and the stability of the origin depends on the real parts of the eigenvectors of $B_\epsilon$, which are the so called characteristic multipliers.
\end{proof}

\begin{coro}[Instability conditions]
\label{coro:Inestabilidad}
Let us consider the following expression
\begin{equation}
\label{eq:CondicionInestabilidad}
\lambda_\epsilon=\dfrac{1}{nT}\int_{t_0}^{t_0+nT}Tr\left[A(t;\epsilon) \right]dt,
\end{equation}
where $Tr\left[A(t;\epsilon) \right]$ is the trace of the matrix $A(t;\epsilon)$. If $\lambda_\epsilon>0$, the periodic solution $x(t, x_0^*(\epsilon))$ is unstable.
\end{coro}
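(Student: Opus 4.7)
The plan is to obtain the conclusion as a direct consequence of Theorem~\ref{theorem:MainStability}(ii) by relating the integral $\lambda_\epsilon$ to the sum of the real parts of the characteristic exponents of $A(t;\epsilon)$. The bridge is Liouville's (Abel-Jacobi) formula for the Wronskian of a linear system with variable coefficients, which is the standard tool for turning a statement about $\mathrm{Tr}[A(t;\epsilon)]$ into a statement about the determinant of the monodromy matrix.

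First, I would recall from the proof of Theorem~\ref{theorem:MainStability} that the stability of $x(t,x_0^*(\epsilon))$ is governed by the linear $nT$-periodic system $\dot y = A(t;\epsilon) y$. Let $\Phi(t;\epsilon)$ denote its principal fundamental matrix, normalized by $\Phi(t_0;\epsilon)=\mathrm{Id}$. By Liouville's formula,
\begin{equation*}
\det \Phi(t_0+nT;\epsilon) \;=\; \exp\!\left(\int_{t_0}^{t_0+nT} \mathrm{Tr}\left[A(s;\epsilon)\right]\,ds\right) \;=\; e^{nT\,\lambda_\epsilon}.
\end{equation*}

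Next, I would invoke Floquet theory as already used in the proof of Theorem~\ref{theorem:MainStability}: the monodromy matrix $\Phi(t_0+nT;\epsilon)$ equals $e^{nT B_\epsilon}$ (up to the periodic factor $Q(t_0;\epsilon)=\mathrm{Id}$), so its eigenvalues are the characteristic multipliers $\mu_1(\epsilon),\ldots,\mu_N(\epsilon)$, and they are related to the characteristic exponents $\rho_1(\epsilon),\ldots,\rho_N(\epsilon)$ by $\mu_j(\epsilon)=e^{nT\rho_j(\epsilon)}$. Taking the modulus of the identity $\det\Phi(t_0+nT;\epsilon)=\prod_j \mu_j(\epsilon)=e^{nT\sum_j \rho_j(\epsilon)}$ and combining with Liouville's formula yields
\begin{equation*}
\sum_{j=1}^N \mathrm{Re}\,\rho_j(\epsilon) \;=\; \lambda_\epsilon.
\end{equation*}

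Finally, under the hypothesis $\lambda_\epsilon>0$, the sum of the real parts of the characteristic exponents is strictly positive, so at least one exponent must satisfy $\mathrm{Re}\,\rho_{j^\ast}(\epsilon)>0$. Theorem~\ref{theorem:MainStability}(ii) then implies that $x(t,x_0^*(\epsilon))$ is unstable, which is what we wanted.

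There is no real obstacle here; the statement is essentially a textbook application of Liouville's formula together with Theorem~\ref{theorem:MainStability}(ii). The only point requiring a bit of care is the choice of normalization of the fundamental matrix so that $\det\Phi(t_0+nT;\epsilon)$ is exactly $e^{nT\lambda_\epsilon}$ (any other normalization introduces a multiplicative constant that cancels when comparing moduli). Everything else is bookkeeping with the $Q(t;\epsilon)e^{tB_\epsilon}$ decomposition already introduced.
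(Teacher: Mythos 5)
Your proposal is correct and follows essentially the same route as the paper: both reduce the claim to Theorem~\ref{theorem:MainStability}(ii) via the identity $\sum_j \mathrm{Re}\,\rho_j(\epsilon)=\frac{1}{nT}\int_{t_0}^{t_0+nT}\mathrm{Tr}[A(s;\epsilon)]\,ds$. The only difference is that the paper asserts this identity as a known fact (and, somewhat loosely, calls the eigenvalues of $B_\epsilon$ ``characteristic multipliers''), whereas you derive it explicitly from Liouville's formula and correctly distinguish multipliers from exponents.
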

\begin{proof}
Let $\lambda_1,\ldots,\lambda_N$ be the characteristic multipliers associated with $B_\epsilon$. The result follows from theorem~\ref{theorem:MainStability} $(ii)$ and the fact that
$$\sum_{i=1}^N\lambda_i=\dfrac{1}{nT}\int_{t_0}^{t_0+nT}Tr\left[A(t;\epsilon) \right]dt.$$
Thus, if $\lambda_\epsilon>0$ we have that at least one of the characteristic multipliers has positive real part.
\end{proof}

\section{Applications}
\label{sec:Applications}
Here we show two applications corresponding to the non-degenerate and degenerate cases, respectively. We demonstrate the existence of periodic orbits in both cases. Nevertheless, stability can not be guaranteed in the non-degenerate case because of the complexity involved in the variational equation leading to $x_1$ in \eqref{eq:ExpansionDeX}.
}

\subsection{Perturbed Generalized Euler System. The Non-Degenerate Case}
\label{sec:Generalized Euler System}
As an application of the non-degenerate case of theorem~\ref{theorem:MainStability}, we study the existence of periodic orbits in a perturbed generalized Euler system \cite{CrespoFerrer2015}. That is to say, we consider the following system of differential equations
\begin{gather}
  \begin{aligned}
  \label{eq:PerturbedGeneralizedEulerSystem}
  \dot x_1=&\, a_1\, x_2 x_3+\epsilon\, p_1(x_2,x_2,x_3,t),\\
  \dot x_2=&\, a_2\, x_1 x_3+\epsilon\, p_2(x_2,x_2,x_3,t),\\
  \dot x_3=&\, a_3\, x_1 x_2+\epsilon\, p_3(x_2,x_2,x_3,t),\\
  \end{aligned}
\end{gather} 
where $a_1,a_2,a_3\in\mathbb{R}$ satisfy that $a_1a_2a_3<0$. The perturbation introduces the dependence on the independent variable and is given by the following periodic vector
\begin{gather}
  \begin{aligned}
  \label{eq:Perturbation}
   p_1(x_1,x_2,x_3,t)=& \,x_1+ \alpha_1\, \text{cn}( \omega_1 t, k)+ \beta_1\, \text{sn}( \omega_2 t, k),\\
   p_2(x_1,x_2,x_3,t)=& \,x_2+ \alpha_2\, \text{cn}( \omega_1 t, k)+ \beta_2\, \text{sn}( \omega_2 t, k),\\
   p_3(x_1,x_2,x_3,t)=&\,{x_3}+ {a}\,{\text{nd}( \omega_3 t,k)}+b\, \dfrac{x_1}{x_3} \,\text{cn}(\omega_2 t,k),+c\, \dfrac{x_2}{x_3} \,\text{sn}(\omega_2 t,k),\\
  \end{aligned}
\end{gather} 
where $\alpha_1,\beta_1,\alpha_2,\beta_2,a,b,c\in\mathbb{R}$. Moreover, $\text{cn}( t,k)$, $\text{sn}( t,k)$ and $\text{dn}( t,k)$ denote the Jacobi elliptic functions with $k$ the elliptic modulus and using Glaisher notation $\text{nd}( t,k)$ is the inverse of $\text{dn}( t,k)$. Note that system \eqref{eq:PerturbedGeneralizedEulerSystem} depends on ten parameters and involves elliptic functions. 

{\color{black}The above example does not resemble any specific application. Instead, we chose a non-trivial perturbation involving elliptic functions to illustrate the previous theory. By this choice, we show that our development tackles a wide range of perturbations, and even for the case of elliptic perturbations, technical difficulties associated with the integrals in the Melnikov functions are affordable. A similar and very interesting example from the applied point of view was studied in \cite{Yagasaki2018}, a quad-rotor helicopter model involving a trigonometric perturbation. In a forthcoming research, we will focus on particular physically meaningful applications. }

\subsubsection{A Pr\'ecis on the Generalized Euler System}
\label{sec:Precis}
In order to have a good underestanding of the dynamics of the unperturbed system, this section gives a brief description of the main facts about the case $\epsilon=0$ in system \eqref{eq:PerturbedGeneralizedEulerSystem}
\begin{gather}
  \begin{aligned}
  \label{eq:GeneralizedEulerSystem}
  \dot x_1=& a_1\, x_2 x_3,\\
  \dot x_2=& a_2\, x_1 x_3,\\
  \dot x_3=& a_3\, x_1 x_2.\\
  \end{aligned}
\end{gather}
This system is dubbed as Generalized Euler system in \cite{CrespoFerrer2015}, and it is studied for arbitrary parameters $a_1,a_2,a_3\in\mathbb{R}$. However, interchanging variables and equations, if needed, and by choosing a suitable orientation of the independent variable, if it is needed, system \eqref{eq:GeneralizedEulerSystem} may be arranged in such a way that the parameter $a_1>0$, $a_3\geq0$ and $a_2\in\mathbb{R}$. In what follows, we will assume that our system is expressed in this way. 

It is well known that a dynamical system fitting \eqref{eq:GeneralizedEulerSystem} are Euler equations, { i.e.},  the reduced system associated with the motion of a rigid body with a fixed point in the absence of external forces. Then, the variables $ x_i$ are the three components of the angular momentum, and  parameters $a_i$ depend on the {principal moments of inertia} of the rigid body.

The sign of the parameters determine two possible cases of bounded or unbounded orbits. In this application the solutions of the unperturbed system  must be periodic. For that reason we restrict to the case $a_1a_2a_3<0$, which implies that all the solutions are bounded. Next, we will impose additional conditions to ensure that they are indeed periodic.


A simple computation shows that system \eqref{eq:GeneralizedEulerSystem} is endowed with the following polynomial integrals
\begin{equation}
\label{eq:Integrals}
h_1=a_2 x_3^2-a_3x^2_2,\quad h_2=a_3 x_1^2-a_1x^2_3,\quad h_3=a_1 x_2^2-a_2x^2_1.
\end{equation}
For the case $a_1a_2a_3<0$, the integrals $h_1$ and $h_3$ are two elliptic cylinders and a $h_2$ is a  hyperbolic one, whose intersection gives the trajectories of the solutions for system \eqref{eq:GeneralizedEulerSystem}. 

In \cite{CrespoFerrer2015}, it is proven that given initial conditions such that $h_1\neq0$, $h_2\neq0$ and $h_3\neq0$, the solutions trajectories of system \eqref{eq:GeneralizedEulerSystem} are not contained but equal to each of the connected component of the intersection of the integrals. For the case $a_2<0$ we obtain periodic and bounded solutions, and $a_2>0$ leads to the non-bounded ones. If any of the parameters vanishes, we have the following cases; $a_j=0$ and $a_ia_k > 0$ leads to unbounded solutions, while $a_j=0$, $a_ia_k < 0$ to the bounded ones. The case in which two parameters vanish corresponds to straight trajectories.

Note that the integral $h_2$ has a key role separating two different types of dynamics. Precisely for $h_2<0$ we have oscillations around the $x_3$-axis, for $h_2>0$ the oscillations are around $x_1$-axis, and $h_2=0$ separates both dynamics by herteroclinic orbits.

Finally, we provide explicitly the solution of system \eqref{eq:GeneralizedEulerSystem} for the bounded case. Moreover, our formulas are restricted for the case $a_2h_a>0$. Namely, we have that the solutions of the bounded extended Euler system with initial conditions
$$ x_1(s_0)= x_1^0=\sqrt{\frac{h_3}{a_2}},\quad x_2(s_0)= x_2^0=0, \quad x_3(s_0)= x_3^0=\sqrt{\frac{h_1}{a_2}},$$
may be expressed as
\begin{gather}
\begin{aligned}
\label{eq:BoundedSolutionsGeneral}
& x_1(t+t_0)=\phantom{-}\sqrt{\Big|\dfrac{h_3}{a_2}\Big|}{\text cn}(\mu\,t,k),\\
& x_2(t+t_0)=-\sqrt{\Big|\dfrac{h_3}{a_1}\Big|}{\text sn}(\mu\,t,k), \\
& x_3(t+t_0)=\phantom{-}\sqrt{\Big|\dfrac{h_1}{a_2}\Big|}{\text dn}(\mu\,t,k), 
\end{aligned}
\end{gather}
where the elliptic modulus is given by
\begin{equation}
\label{}\nonumber
k=\sqrt{\Big|\dfrac{a_3h_3}{a_1h_1}\Big|}, \qquad \mu=\sqrt{|a_1h_1|}.
\end{equation}
Furthermore, $ x_3(s)$ and $ x_2(s)$ are $4\mathcal{P}$ periodic and $ x_1(s)$ is $2\mathcal{P}$ periodic if $a_2 h_2>0$. The period may be calculated through $\mathcal{P}$ as the integral given by
\begin{equation}
\label{eq:CuartoPeriodo}
\mathcal{P}(a_1,h_1,a_3,h_3)=\int_{0}^{1}\dfrac{\sqrt{|a_1/h_1|}}{\sqrt{(1-z^2)(1-k^2 z^2)}}\,dz.
\end{equation}
Notice that the above solutions \eqref{eq:BoundedSolutionsGeneral} are periodic orbits oscillating around the $x_3$-axis.

\subsubsection{Perturbed Orbits Throught Melnikov Method}
\label{sec:PerturbedOrbitsGeneralizedEuler}
Now we will show the existence of periodic orbits associated with the perturbed generalized Euler system given in \eqref{eq:PerturbedGeneralizedEulerSystem}. For this purpose, we will fix the value of the parameters to
$$a_1=a_3=1,\;a_2=-2,\quad \alpha_1=-\dfrac{3}{2\sqrt{2}},\;\beta_1=2.89972,\quad \alpha_2=0,\;\beta_2=\dfrac{3}{2},$$
$$a=-\dfrac{3}{\sqrt{2}},\;b=0,\; c=-\dfrac{3}{4}\quad  \omega_1=1,\;\omega_2=2,\;\omega_3=3,\;k=\dfrac{1}{2},$$
which yields
\begin{gather}
  \begin{aligned}
  \label{eq:PerturbedGeneralizedEulerSystemApplication}
  \dot x_1=& \,x_2 x_3+\epsilon\, \left(x_1-\dfrac{3}{2\sqrt{2}} \text{cn}(  t, \frac{1}{2})+2.89972\,\text{sn}(  2t, \frac{1}{2})\right),\\
  \dot x_2=& -2\, x_1 x_3+\epsilon\, \left(x_2+ \dfrac{3}{2}\text{sn}( 2 t, \frac{1}{2})\right),\\
  \dot x_3=& \, x_1 x_2+\epsilon\, \left({x_3}- \dfrac{3}{\sqrt{2}}\,\text{nd}( 3 t,\frac{1}{2})- \dfrac{3}{4}\dfrac{x_2 }{x_3}\,\text{sn}(2 t,\frac{1}{2})\right).\\
  \end{aligned}
\end{gather}

\begin{figure}[h!]\centering
\subfigure[]{\label{fig:OrbitaPeriodica41}\includegraphics[width=160pt]{./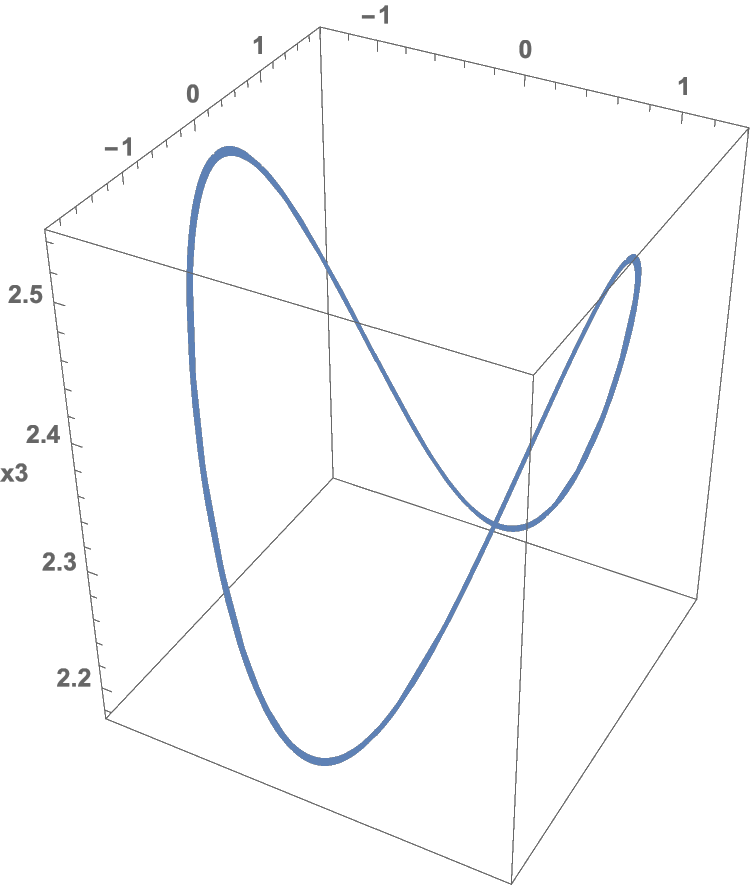}}
\subfigure[]{\label{fig:OrbitaPeriodica42}\includegraphics[width=170pt]{./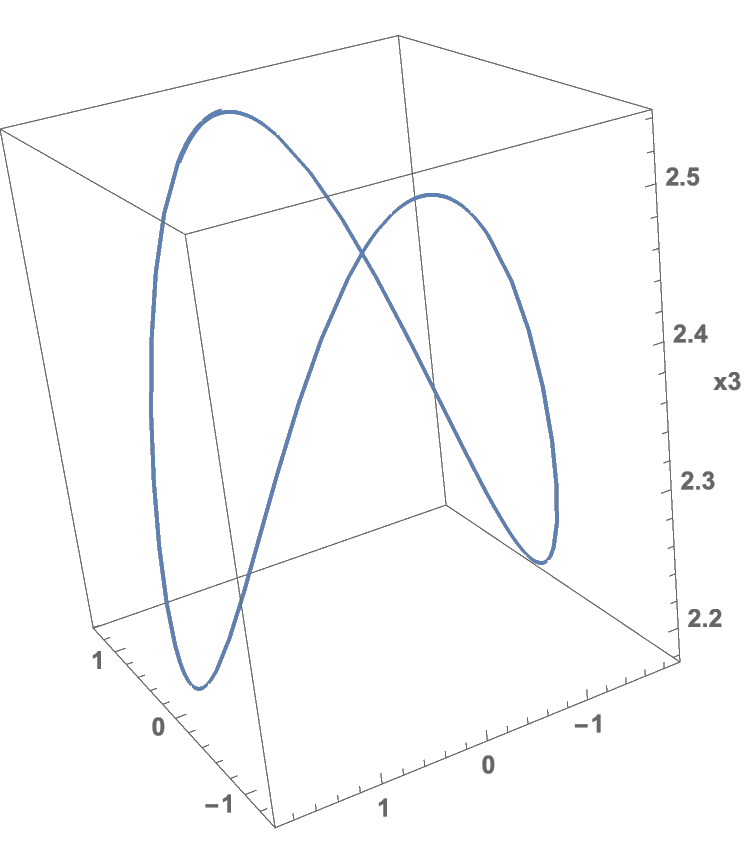}}\\
\subfigure[]{\label{fig:OrbitaPeriodica43}\includegraphics[width=310pt]{./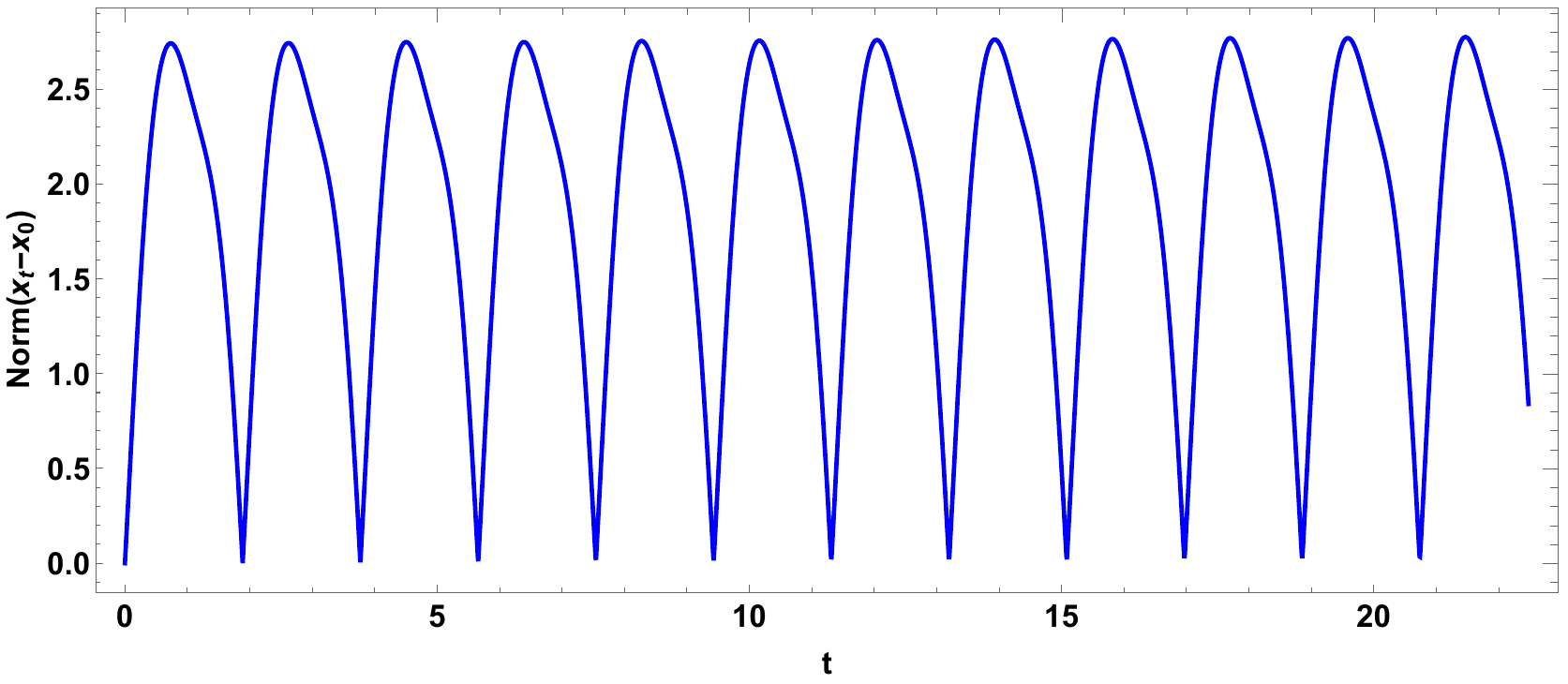}}
      \caption{Periodic orbit simulation. Figure~\ref{fig:OrbitaPeriodica41} and Figure~\ref{fig:OrbitaPeriodica42} shows the periodic orbit for $\epsilon=0.001$ and $\epsilon=0.002$. In Figure~\ref{fig:OrbitaPeriodica43}, we plot the evolution of $Norm[(x_1(t),x_2(t),x_3(t))-(x_1(0),x_2(0),x_3(0))]$ for eleven cycles and $\epsilon=0.001$.} \label{fig:OrbitaPeriodica}
\end{figure}

The unperturbed system $\epsilon=0$ satisfies the hypothesis (i), (ii) and (iii) assumed in Section~\ref{sec:StructureUnperturbedSystem}. Precisely, we define the open set $\mathcal{D}\subset\mathbb{R}^3$ by
$$\mathcal{D}=\{(x_1,x_2,x_3)\in\mathbb{R}^3;\;a_2 h_2>0,\,x_3>0\},$$
which is made only of periodic orbits. The so called initial value manifold $\mathcal{D}_0$ is the intersection of $\mathcal{D}$ with the plane $x_2=0$, and the vector of integrals $I=(I_1,I_2)$ is given by 
$$I_1=\sqrt{|h_1/a_1|},\quad I_2=\sqrt{|h_3/a_3|}.$$ 
The period $P(x_0)=P(I)$ of any solution starting in $\mathcal{D}$ is computed by \eqref{eq:CuartoPeriodo} as
\begin{equation}
\label{eq:CuartoPeriodoSistema}
{P}(I_1,I_2)=\dfrac{4}{\sqrt{I_1}}\int_{0}^{1}\dfrac{1}{\sqrt{(1-z^2)(1-\frac{I_2}{I_1}^2 z^2)}}\,dz.
\end{equation}
Therefore, ${P}(I_1,I_2)$ is a smooth function. It is equal to the period of the perturbation for any initial condition in the curve obtained by the intersection $I_1=2$ and $I_2=1$.
Now, using the solution of the unperturbed system \eqref{eq:BoundedSolutionsGeneral} and recalling that we denote $T$ for the period of the perturbation, we have that the change of variables $(x_1,x_2,x_3)\rightarrow(I_1,I_2,\theta)$ given in \eqref{eq:ChangeVariables} is determined by
\begin{gather}
  \begin{aligned}
 \label{eq:CambioVariablesAplicacion}
 x_1(I_1,I_2,\theta)=&I_2\,\text{cn}( \sqrt{2} \,I_1 \lambda\,\theta, \frac{I_2}{I_1}),\\
 x_2(I_1,I_2,\theta)=&-\sqrt{2}\,I_2\,\text{sn}( \sqrt{2}I_1 \lambda\,\theta, \frac{I_2}{I_1}),\\
 x_3(I_1,I_2,\theta)=&I_1\,\text{dn}(\sqrt{2}\,I_1 \lambda\,\theta, \frac{I_2}{I_1}),
  \end{aligned}
 \end{gather} 
where $\lambda=P(I)/T$.

Taking into account that $\Omega_I=1/\lambda=T/P(I_1,I_2)$ correspond with the non-degenerate case, we compute the Melnikov vector $(M_1,M_2)$ by means of equations \eqref{eq:IntIi}
\begin{gather}
  \begin{aligned}
 \label{eq:MelnikovSolido}
 M_{1}=&\,\dfrac{1}{I_1}\int_0^T x_2^2(I_1,I_2,\Omega_It+\theta_0,)+2x_3^2(I_1,I_2,\Omega_It+\theta_0,)+ \dfrac{-3\sqrt{2} \,x_3(I_1,I_2,\Omega_It+\theta_0,)}{\text{dn}( 3 t,\frac{1}{2})} \, dt,\\
 M_{2}=&\,\dfrac{2}{I_2}\int_0^T {x_1}(I_1,I_2,\Omega_It+\theta_0,) \left(\frac{-3}{2\sqrt{2}} \text{cn}(t, {1}/{2})+2.89972 \,\text{sn}(2 t,{1}/{2})   +{x_1}(I_1,I_2,\Omega_It+\theta_0,)\right)\\
 &\phantom{\,\dfrac{2}{I_2}\int_0^T}   +{x_2}(I_1,I_2,\Omega_It+\theta_0,) \left(\frac{3}{2}\,\text{sn}(2t, {1}/{2})+{x_2}(I_1,I_2,\Omega_It+\theta_0,)\right)\, dt.
  \end{aligned}
 \end{gather} 
 
After some cumbersome computations, that involves the integration with several Jacobi elliptic functions, we obtain the following zero of the Melnikov vector
$$\theta_0^*=0,\quad I_1^*=\dfrac{3}{\sqrt{2}},\quad I_2^*=\dfrac{3}{2\sqrt{2}},$$
with the following degeneration matrix
\begin{equation}
\begin{bmatrix}
\partial\Omega_I/\partial\theta_0 & \partial\Omega_I/\partial I_1 & \partial\Omega_I/\partial I_2\\
\partial M_1/\partial\theta_0 & \partial M_1/\partial I_1 & \partial M_1/\partial I_2\\
\partial M_2/\partial\theta_0 & \partial M_2/\partial I_1 & \partial M_2/\partial I_2
\end{bmatrix}=-5.16.
\end{equation}

Thus, we can ensure that system \eqref{eq:PerturbedGeneralizedEulerSystemApplication} has a periodic orbit for small $\epsilon$ in the neighborhood of the initial condition
$$(x_1(I_1^*,I_2^*,\theta_0^*),x_2(I_1^*,I_2^*,\theta_0^*),x_3(I_1^*,I_2^*,\theta_0^*))=(1.06066,0,2.12132).$$
In Figure~\ref{fig:OrbitaPeriodica} we plot the numerical evaluation of the solution of system \eqref{eq:PerturbedGeneralizedEulerSystemApplication} for the above initial condition. It illustrates the periodic orbit for two small values of $\epsilon$ and also shows the behavior for several cycles of the oscillation.

{\color{black}

\subsection{Existence and Stability in the Degenerate Case}
Let us consider the following system of differential equations
\begin{gather}
  \begin{aligned}
  \label{eq:PerturbedOscillator2Dof}
  \dot x_1=&\, \omega\, y_1+\epsilon\, a\, \cos(t),\\
  \dot y_1=&\, -\omega\, x_1+\epsilon b\left(x_1\left(x_2^2+y_2^2\right) \cos(t)\right),\\
  \dot x_2=&\, \omega\, y_2+\epsilon \,c\,x_1\left(1-\left(x_2^2+y_2^2\right) \right),\\
  \dot y_2=&\, -\omega\, x_1+\epsilon\,d\, x_1\left(1-\left(x_2^2+y_2^2\right) \right),\\
  \end{aligned}
\end{gather} 
where $\omega=1$ and $\{a,b,c,d\}=\{601/(6 \sqrt{5}),0,-1/1080,1/2280\}$. This system is endowed with the following quadratic homogeneous polynomials integrals
$$I_1=x_1^2+y_1^2,\quad I_2=x_2^2+y_2^2,\quad I_3=x_1x_2+y_1y_2.$$

For $\epsilon=0$, the general solution of the above system reads as follows
\begin{gather}
  \begin{aligned}
  \label{eq:GeneralSolutionOscillator}
   x_{1}&= \sqrt{ I_{1}} \cos (\theta_0+ \omega\, t), \\
   y_{1}&= \sqrt{ I_{1}} \sin (\theta_0+ \omega\, t)\\
   x_{2}&= \frac{ I_{3} \cos (\theta_0+ \omega\, t)- \sqrt{ I_{1}  I_{2}- I_{3}^2}
   \sin (\theta_0+ \omega\, t)}{ \sqrt{ I_{1}} }, \\
    y_{2}&= \frac{I_{3} \sin (\theta_0+ \omega\, t)+\sqrt{ I_{1}  I_{2}- I_{3}^2} \cos
   (\theta_0+ \omega\, t)}{ \sqrt{ I_{1}} }.
  \end{aligned}
\end{gather} 
These formulas provide us with the change of variables $x=G(I,\theta)$ given in \eqref{eq:ChangeVariables}, where $x=\{x_1,y_1,x_2,y_2\}$. Moreover, in our case $nT=2\pi$, and the Melnikov vector is given by $\{M_1,M_2,M_3,M_4\}$, where
\begin{gather}
  \begin{aligned}
  \label{eq:MelnikovVectorOscillator}
   M_{1}=\int_0^{nT}& 2b\,x_1 \cos (t)+2  y_{1}\left(  x_{1} \left( x_{2}^2+ y_{2}^2\right)+a \cos(t)\right)dt, \\
   M_{2}=\int_0^{nT}&2 c\,  x_{1}  x_{2} \left(- x_{2}^2- y_{2}^2+1\right)+2 d\,  x_{1}  y_{2} \left(- x_{1}^2- y_{1}^2+1\right)dt\\
   M_{3}= \int_0^{nT}& y_{2} \left(a \cos (t)+ x_{1} \left( x_{2}^2+ y_{2}^2\right)\right)+b\,  x_{2} \cos (t).
  \end{aligned}
\end{gather} 
The expression for $M_4$ is computed by taking into account remark~\ref{remark:MNDegenerada}

\begin{gather}
  \begin{aligned}
 \label{eq:M4Degenerada}
M_4&= \int_0^{nT}\dfrac{f_0(I_0(t,z_0),\theta_0(t,z_0))\cdot f_1(I_0(t,z_0),\theta_0(t,z_0),t)}{|f_0(I_0(t,z_0),\theta_0(t,z_0))|^2}dt\\
 & - \int_0^{nT}\sum_{i=1}^{3}\dfrac{f_0\cdot G_{I_i}(I_0(t,z_0),\theta_0(t,z_0))}{|f_0(I_0(t,z_0),\theta_0(t,z_0))|^2}- DI_i\, f_1(I_0(t,z_0),\theta_0(t,z_0))dt,
  \end{aligned}
 \end{gather} 
where $f_0(I_0(t,z_0),\theta_0(t,z_0))$ and $f_1(I_0(t,z_0),\theta_0(t,z_0))$ are obtained by plugin formulas \eqref{eq:GeneralSolutionOscillator} into 
\begin{gather}
  \begin{aligned}
  f_0&=\omega\{y_1,-x_1,y_2,-x_2\}\\
  f_1&=\omega\{a \cos(t),0,c\,x_1\left(1-\left(x_2^2+y_2^2\right) \right),d\,x_1\left(1-\left(x_2^2+y_2^2\right) \right)\}.
  \end{aligned}
 \end{gather} 
Moreover, we compute $G_{I_i}(I_0(t,z_0),\theta_0(t,z_0))$ and $D_{I_i}(I_0(t,z_0),\theta_0(t,z_0))$ as
\begin{gather}
  \begin{aligned}
  G_{I_i}(I_0(t,z_0),\theta_0(t,z_0))&=\dfrac{\partial}{\partial I_i}\{x_1(I,\theta),y_1(I,\theta),x_2(I,\theta),y_2(I,\theta)\},
  \end{aligned}
 \end{gather} 
while  $D_{I_i}(I_0(t,z_0),\theta_0(t,z_0))$ is obtained as the differential of $I_i(x_1,y_1,x_2,y_2)$, and then plugin formulas \eqref{eq:GeneralSolutionOscillator}.
We found the following zero of the Melnikov function $(I_0^*,\theta_0^*)=(I_1^*,I_2^*,I_3^*,\theta_0^*)$, where $I_1^*= 5,\quad  I_2^* = 2,\quad  I_3^* =1,\quad \theta_0^* = \pi$, which satisfies that
\begin{equation}
\begin{bmatrix}
\partial M_1/\partial\theta_0 & \partial M_1/\partial I_1 & \partial M_1/\partial I_2& \partial M_1/\partial I_3\\
\partial M_2/\partial\theta_0 & \partial M_2/\partial I_1 & \partial M_2/\partial I_2& \partial M_2/\partial I_3\\
\partial M_3/\partial\theta_0 & \partial M_3/\partial I_1 & \partial M_3/\partial I_2& \partial M_3/\partial I_3\\
\partial M_4/\partial\theta_0 & \partial M_4/\partial I_1 & \partial M_4/\partial I_2& \partial M_4/\partial I_3
\end{bmatrix}=49.9940.
\end{equation}
Thus, for each $\epsilon$, there is a periodic orbit of system~\eqref{eq:PerturbedOscillator2Dof} with initial condition near to
$$\left(x_1^*(I_0^*,\theta_0^*),y_1^*(I_0^*,\theta_0^*),x_2^*(I_0^*,\theta_0^*),y_2^*(I_0^*,\theta_0^*)\right)=\left(-2 \sqrt{5},-\sqrt{5},0,-\sqrt{5}\right).$$

\begin{figure}[h!]\centering
\subfigure[]{\label{fig:OrbitaPeriodicaEpsilonPositivoNorma}\includegraphics[width=200pt]{./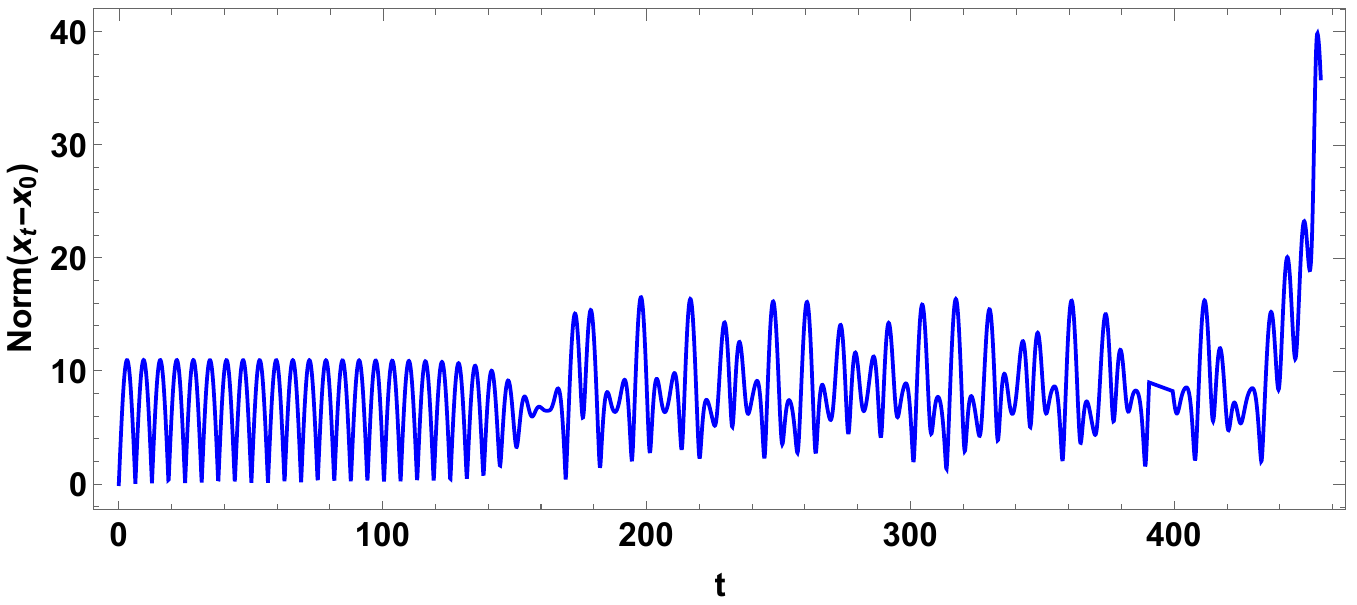}}
\subfigure[]{\label{fig:OrbitaPeriodicaEpsilonNegativoNorma}\includegraphics[width=200pt]{./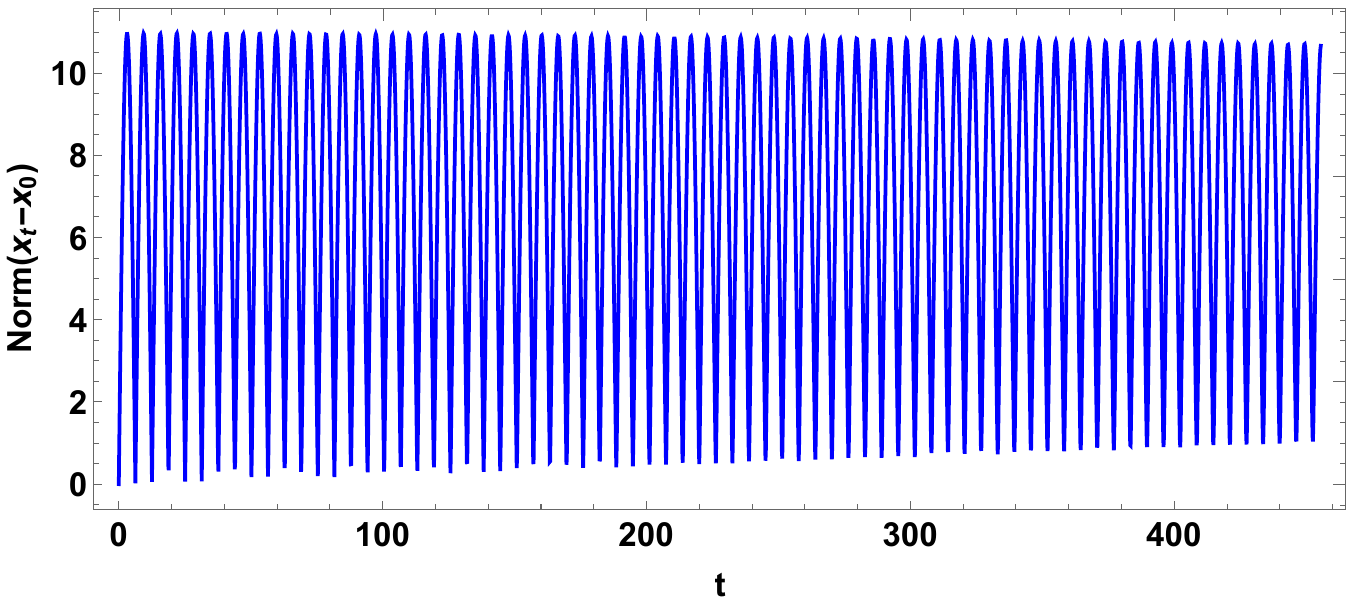}}
      \caption{Evolution of $Norm[(x_1(t),y_1(t),x_2(t),y_2(t))-(x_1(0),y_1(0),x_2(0),y_2(0))]$ for 72 periods. In Figure~\ref{fig:OrbitaPeriodicaEpsilonPositivoNorma} we consider $\epsilon=0.1$ and Figure~\ref{fig:OrbitaPeriodicaEpsilonNegativoNorma} is for $\epsilon=-0.1$.} \label{fig:OrbitaPeriodicaOscNorma}
\end{figure}

\begin{figure}[h!]\centering
\subfigure[]{\label{fig:OrbitaPeriodicaEpsilonPositivo}\includegraphics[width=150pt]{./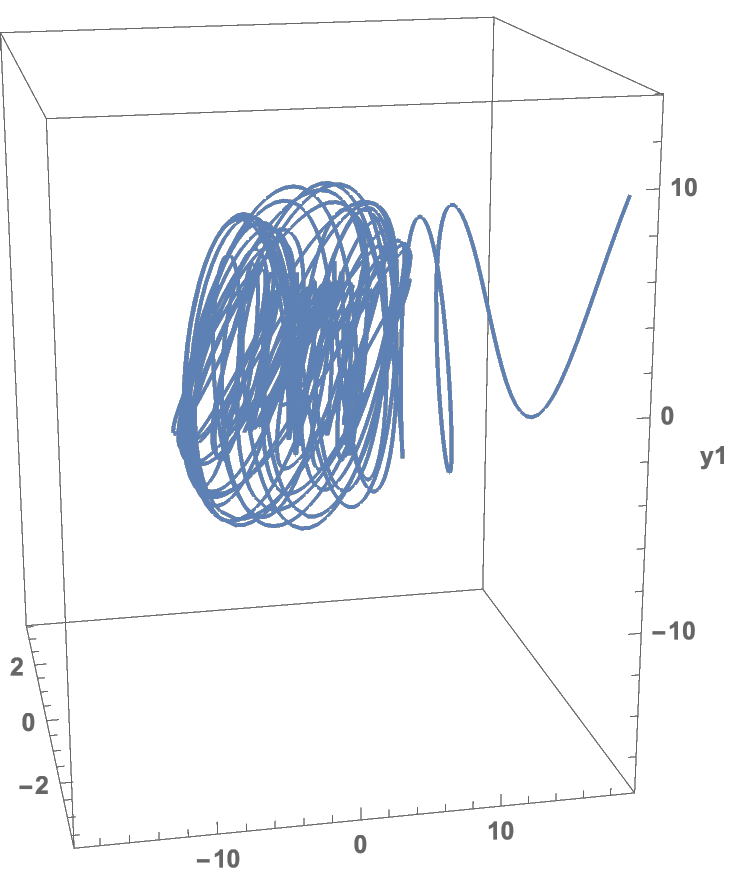}}\quad
\subfigure[]{\label{fig:OrbitaPeriodicaEpsilonNegativo}\includegraphics[width=150pt]{./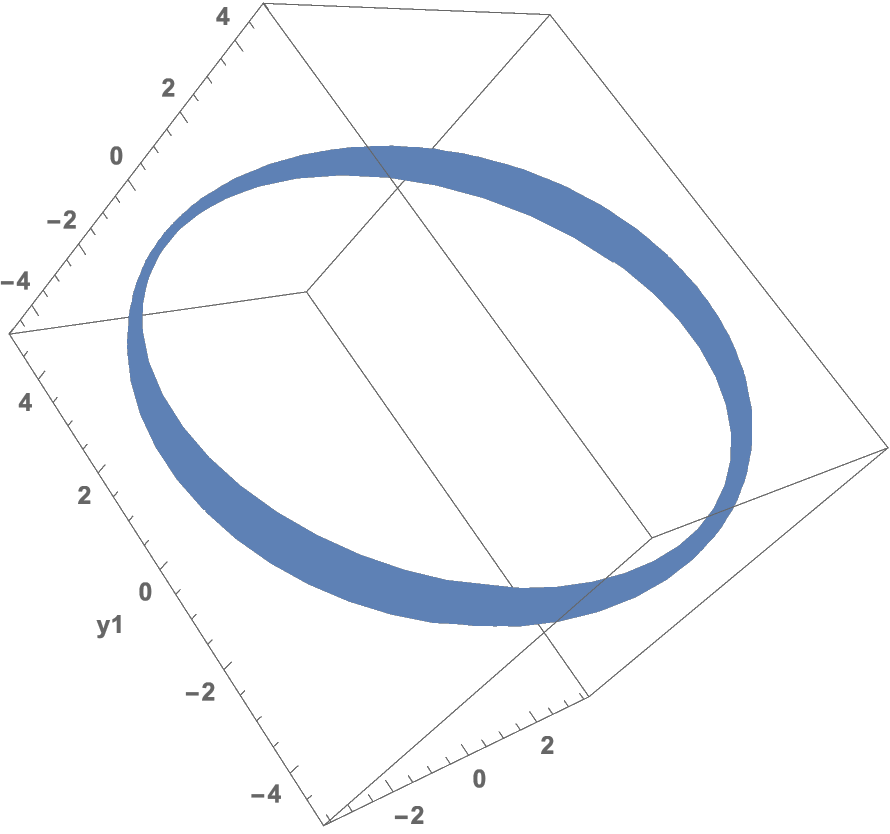}}
      \caption{Periodic orbit simulation: we plot the projection $(x_1(t),x_2(t),y_1(t))$ for 72 periods. In Figure~\ref{fig:OrbitaPeriodicaEpsilonPositivo} we consider $\epsilon=0.1$ and Figure~\ref{fig:OrbitaPeriodicaEpsilonNegativo} is for $\epsilon=-0.1$.} \label{fig:OrbitaPeriodicaOsc}
\end{figure}

Moreover, we assess the stability by means of corollary~\ref{coro:Inestabilidad}, which is in this case yields 
$$\dfrac{1}{2\pi}\int_{t_0}^{t_0+2\pi}Tr\left[A(t;\epsilon) \right]dt=0.0054\,\epsilon.$$
Therefore, we conclude that the periodic orbits are unstable for $\epsilon>0$. Next, we illustrate our conclusions with a numeric simulation for $\epsilon>0$ and $\epsilon<0$. Figure~\ref{fig:OrbitaPeriodicaOscNorma} shows the evolution of $Norm[(x_1(t),y_1(t),x_2(t),y_2(t))-(x_1(0),y_1(0),x_2(0),y_2(0))]$ for 72 periods. In the case $\epsilon<0$ and for an initial condition near $x^*(I_0^*,\theta_0^*)$ we have a trajectory with a very regular behavior. However, for $\epsilon>0$, the solution begin to tumble after 8 periods and finally scapes for more that 72.  This behavior is confirmed in Figure~\ref{fig:OrbitaPeriodicaOsc}, where, for the same initial condition, we plot a close to periodic trajectory for $\epsilon<0$ and 72 periods. The case $\epsilon>0$ shows an orbit which eventually moves away from the periodic orbit.

}

\section*{Acknowledgements}
Support from the research projects GISDA 196108 GI/C UBB is acknowledged. The author F.C. acknowledges support from 2020157 IF/R UBB. The second author was partially suported by Proyecto DIREG 14/2022, Direcci\'on de Investigaci\'on de la Universidad Cat\'olica de la Sant\'{i}sima Concepci\'on, Chile and Proyecto Ingenier\'{i}a 2030 (ING222010004).

\end{document}